\newcommand{\Z}{\mathbb{Z}}
\newcommand{\C}{\mathbb{C}}
\newcommand{\F}{\mathbb{F}}
\renewcommand{\P}{\mathbb{P}} 
\newcommand{\G}{\Gamma} 
\newcommand{\T}{\mathbf{T}} 
\newcommand{\U}{\mathbf{U}}
\newcommand{\Tr}{\mathcal{T}} 
\newcommand{\D}{\mathcal{D}}
\newcommand{\g}{\gamma}
\newcommand{\m}{\mathfrak{m}}
\renewcommand{\O}{\Omega}
\newcommand{\mf}{\,|_{k,m}}
\newtheorem{thm}{Theorem}[section]
\newtheorem{prop}[thm]{Proposition}
\newtheorem{defin}[thm]{Definition}
\newtheorem{rem}[thm]{Remark}
\renewcommand{\matrix}[4]{\left(\begin{array}{cc} {#1} & {#2} \\ {#3} & {#4} \end{array}\right)}
\title{On Drinfeld cusp forms of prime level}
\author{Andrea Bandini}
\address{{\sc Andrea Bandini}: Universit\`a di Pisa \\
Dipartimento di Matematica \\
Largo Bruno Pontecorvo 5\\
 56127 Pisa, Italy
   }
\email{andrea.bandini@unipi.it}
\author{Maria Valentino}
\address{{\sc Maria Valentino}: Universit\`a della Calabria\\
Dipartimento di Matematica e Informatica\\
Ponte P. Bucci, Cubo 30B
87036 Rende (CS), Italy}
\email{valentino@mat.unical.it}
\begin{document}

\begin{abstract}
Let $(P_d)$ be any prime of $\F_q[t]$ of degree $d$ and consider the space of Drinfeld cusp forms {\em of level
$P_d$}, i.e. for the modular group $\G_0(P_d)$. We provide a definition for oldforms and newforms of level $P_d$. 
Moreover, when the dimension of the vector space of oldforms is one 
and $P_1=t$ we prove that the space of cuspforms of level $t$ is the direct sum of oldforms and newforms and that 
the Hecke operator $\T_t$ acting on Drinfeld cusp forms of level 1 is injective, thus providing more evidence for the 
conjectures presented and stated in \cite{BV2} and \cite{BV3}. 
\end{abstract}

\maketitle

\section{Introduction}
Let $K$ be the global function field $\F_q(t)$, where $q$ is a power of a fixed prime $p\in\Z$,
 fix the prime $\frac{1}{t}$ at $\infty$ and denote by
$\mathcal{O}:=\F_q[t]$ its ring of integers (i.e., the ring of functions regular outside $\infty$).
Let $K_\infty=\F_q(\!(\frac{1}{t})\!)$ be the completion of $K$ at $\frac{1}{t}$ with ring of integers 
$\mathcal{O}_\infty=\F_q\llbracket \frac{1}{t}\rrbracket$
and denote by $\C_\infty$ the completion of an algebraic closure of $K_\infty$.\\
The {\em Drinfeld upper half-plane} is the set $\O:=\P^1(\C_\infty) - \P^1(K_\infty)$
together with a structure of rigid analytic space (see \cite{FvdP}).
The group $GL_2(K_\infty)$ acts on $\Omega$ via M\"obius transformation
\[  \left( \begin{array}{cc}
a & b  \\
c & d
\end{array} \right)(z)= \frac{az+b}{cz+d}. \]
Let $\G$ be an arithmetic subgroup of $GL_2(\mathcal{O})$, then $\G$ has finitely many cusps, i.e. equivalence classes for the 
action of $\G$ on $\P^1(K)$.
For $\g =\matrix{a}{b}{c}{d}\in GL_2(K_\infty)$, $k,m \in \Z$
and $\varphi:\O\to \C_\infty$, we define the $\mf\gamma$ operator by
\begin{equation}\label{Mod0}
(\varphi \mf \g)(z) := \varphi(\g z)(\det \g)^m(cz+d)^{-k}.
\end{equation}
Since for any $\gamma\in GL_2(\mathcal{O})$ one has $\det(\gamma)\in \F_q^*$, the integers $m$ can be cosidered modulo $q-1$.

\begin{defin}
A rigid analytic function $\varphi:\O\to \C_\infty$ is called a {\em Drinfeld modular function of weight $k$ and type $m$ for $\G$} if
\begin{equation}\label{Mod} (\varphi \mf \g )(z) =\varphi(z)\ \ \forall \g\in\G.  \end{equation}
A Drinfeld modular function $\varphi$ of weight $k\geqslant 0$ and type $m\in \Z/(q-1)\Z$ for $\G$ is called a {\em Drinfeld modular form} if
$\varphi$ is holomorphic at all cusps.\\
A Drinfeld modular form $\varphi$ is called a {\em cusp form} if  it vanishes at all cusps.\\
The space of Drinfeld modular forms
of weight $k$ and type $m$ for $\G$ will be denoted by $M_{k,m}(\G)$. The subspace of cuspidal modular forms is denoted by $S^1_{k,m}(\G)$.
\end{defin}
\noindent The above definition coincides with \cite[Definition 5.1]{B}, other authors require the function to be 
meromorphic (in the sense of rigid analysis, see for example \cite[Definition 1.4]{Cor}) and would call our 
functions {\em weakly modular}. 

\noindent We shall deal only with the arithmetic subgroups 
\[ \G=\G_0(\mathfrak{m}):=\left\{ \left( \begin{array}{cc}
a & b  \\
c & d
\end{array} \right) \in GL_2(\mathcal{O})\,:\, c\equiv 0\pmod{\mathfrak{m}} \right\}, \]
where $\mathfrak{m}$ is an ideal of $\mathcal{O}$, and we shall focus mainly on the cases $\mathfrak{m}=1$ (so that
$\G_0(1)=GL_2(\mathcal{O})\,$) and $\mathfrak{m}$ a prime ideal. When $\mathfrak{m}$ is prime we fix the monic irreducible 
generator $P_d$ of $\mathfrak{m}$ and will use simply $P_d$ or $(P_d)$ to denote the ideal.
The spaces $S^1_{k,m}(\G_0(\m))$ denote cusp forms of {\em level $\m$}.
We recall that spaces of Drinfeld modular forms of fixed weight and type are finite dimensional vector space over $\C_\infty$; for details on dimensions the reader is referred to \cite{G}. \medskip

Fix an ideal $\mathfrak{m}$ and a monic irreducible element $P_d$ of degree $d$ in $\mathcal{O}$. Assume $(P_d)$ does not divide $\m$
(which is the case we shall usually work with): we have the following Hecke operators acting, respectively,  on $S^1_{k,m}(\G_0(\m))$ and
 $S^1_{k,m}(\G_0(\m P_d))$:
\[ \T_{P_d}(\varphi)(z):=P_d^{k-m}(\varphi \mf \matrix{P_d}{0}{0}{1})(z)+ 
P_d^{k-m}\sum_{\begin{subarray}{c} Q\in \mathcal{O}\\ \deg Q <d  \end{subarray}} (\varphi \mf \matrix{1}{Q}{0}{P_d})(z) \]
and
\[ \U_{P_d}(\varphi)(z):=P_d^{k-m} \sum_{\begin{subarray}{c}Q\in \mathcal{O}\\ \deg{Q}<d  \end{subarray}} (\varphi\mf \matrix{1}{Q}{0}{P_d})(z).\]
We recall that the operator $\U_{P_d}$ is commonly called {\em Atkin-Lehner operator}, or simply Atkin-operator. \medskip

Using Teitelbaum's representation of cusp forms as cocycles (see \cite{Teit} or \cite{B}, a brief account of the formulas relevant for our computations is in \cite[Sections 2.3 and 2.4]{BV2}), in \cite{BV2} we were able to compute the 
matrix associated with the Atkin operator $\U_t$ acting on $S^1_{k,m}(\G_1(t))$ (where, as usual, 
\[ \G_1(t):=\left\{ \left( \begin{array}{cc}
a & b  \\
c & d
\end{array} \right) \in GL_2(\mathcal{O})\,:\, a,d\equiv 1\pmod{t}\ {\rm and}\ c\equiv 0\pmod{t} \right\}\ {\rm )} \]
and to isolate inside it the blocks referring to the action on the subspace $S^1_{k,m}(\G_0(t))$ (see \cite[Section 4]{BV2}).
In \cite{BV1} (for $\G_1(t)$) and \cite{BV3} (for $\G_0(t)$) we studied the properties of such matrix as a tool to
investigate the analogue of several classical (characteristic zero setting) issues related to Drinfeld cusp forms. In particular,
we considered problems like the structure of cusp forms of level $t$, the injectivity of $\T_t$, diagonalizability and slopes for $\U_t$, 
i.e. $t$-adic valuation of eigenvalues of $\U_t$.  Moreover, we collected data on the 
distribution of slopes (available on the web page  \href{https://sites.google.com/site/mariavalentino84/publications}{https://sites.google.com/site/mariavalentino84/publications}) as the weight varies, which led us to formulate various conjecture \`a la Gouv\^ea-Mazur (see \cite{GM1}) and on the existence
of families of Drinfeld cusp forms. For details see \cite[Section 5]{BV2} and \cite[Section 6]{BV3}. \\ 
We would like to mention that, building on such results, Hattori has recently proved a function field analogue of Gouv\^ea-Mazur's conjecture
(see \cite{Ha1}) and has made relevant progresses in the construction of ($p$-adic) families of Drinfeld modular forms (see \cite{Ha2}).
It is worth mentioning that, following a completely different (more geometric) approach, Nicole and Rosso in \cite{NR} have provided
deep results on the existence of families of modular forms in characteristic $p$. \medskip

In the present paper we shall address the following issues.
\begin{enumerate}
\item[\em i)] \underline{Structure of $S^1_{k,m}(\m P_d)$}. A major and basic topic in the study of classical modular forms is the splitting of $S_k(\G_0(N))$, for a general
level $N\in \Z$,  as oldforms, those coming from a lower level $M|N$, and newforms, i.e. the orthogonal 
complement of the space of oldforms with respect to the Petersson inner product (see \cite[Chapter 5]{DS}). 
In the positive characteristic setting we do not have an analogue of such product, therefore we need a different approach. 
In \cite[Section 3] {BV2} we defined oldforms and newforms of level $t$ and we also conjectured,
and proved in some particular cases, that $S^1_{k,m}(\G_0(t))$ is direct sum of newforms and oldforms.
Here we generalize all definitions to a general prime level $P_d$ and also prove some further results for the
case $P_1=t$. 
\item[\em ii)] {\underline{Injectivity of $\T_t$}. Building on the data mentioned above, we observed a 
 phenomenon that has no analogue in the characteristic 
zero setting, namely that the Hecke operator  $\T_t$ acting on  $S^1_{k,m}(GL_2(\mathcal{O}))$ seems to be injective,
and this would have consequences also on the diagonalizability of $\U_t$ acting on the space of oldforms
(see \cite[Section 3.2]{BV2}). 
In the paper \cite{BV3} we already gave evidence of this conjecture for some special cases, here we shall 
extend the cases in which we can prove the injectivity of $\T_t$. }
\end{enumerate} \medskip

The paper is organized as follows.

\noindent In Section \ref{SecNewOld} we supply definitions of oldforms and newforms. We consider the maps $\delta_1,\delta_{P_d}$, 
called {\em degeneracy maps}, from a lower level $S^1_{k,m}(\G_0(\mathfrak{m}))$ to an upper one $S^1_{k,m}(\G_0(\mathfrak{m}P_d))$ 
(Section \ref{SecDeg}) and use them to define oldforms. On the other side we have {\em trace maps} which go the other way around
and use them, together with the crucial ingredient of {\em Fricke involution}, to define newforms (Section \ref{SecTr}). 
Two main issues appear here:
\begin{itemize}
\item we define newforms only for prime level $P_d$ (hence for $\m=1$), the definition seems easily generalizable for traces but
we lack an involution {\em of level $\m$} to extend it in general;
\item as mentioned above, we do not have the analog of Petersson inner product in our setting, hence we need to prove that cusp forms
are direct sum of our oldforms and newforms to confirm that our definitions are the ``right'' ones. 
\end{itemize}
We use the interaction between degeneracy maps, trace maps and Hecke operators to provide a description of the kernels of
$\T_{P_d}$ and $\U_{P_d}$ (Propositions \ref{KerTPn} and \ref{PropKerUt}): in particular, the criterion 
\[ \varphi\in S^1_{k,m}(GL_2(\mathcal{O}))\ {\rm is\ in\ } \in Ker(\T_{P_d})\ {\rm if\ and\ only\ if\ }
\delta_1(\varphi)\in Ker(\U_{P_d}^2) \]
will be useful to prove the injectivity of $\T_t$ in the case presented in Section \ref{SecSpecCase}.
Moreover, in Theorem \ref{ThmCriterionDirSum}, we show an important criterion, which is a generalization 
of \cite[Theorem 5.1]{BV3}, to get the direct sum between oldforms and newforms by proving that it
is equivalent to the invertibility of the map $\mathcal{D}:=Id-P_d^{k-2m}(Tr')^2$.

\noindent In Section \ref{SecSpecCase} we specialize to the case $P_1=t$. Exploiting the linear algebra 
translation of our conjectures provided in \cite{BV3} and using the criterions above we shall prove the following.

\begin{thm} Assume that $\dim_{\C_\infty}S^1_{k,m}(GL_2(\mathcal{O}))=1$, then we have:
\begin{itemize}
\item {the operator $\T_t$ acting on $S^1_{k,m}(GL_2(\mathcal{O}))$ is injective (Theorem \ref{ThmInj});}
\item {the space $S^1_{k,m}(\G_0(t))$ is direct sum of newforms and oldforms (Theorem \ref{ThmDirSum}).}
\end{itemize}
\end{thm}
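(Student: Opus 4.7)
Let $\varphi$ denote a generator of the one-dimensional space $S^1_{k,m}(GL_2(\mathcal{O}))$. Since the degeneracy maps $\delta_1$ and $\delta_t$ are injective, the subspace of oldforms inside $S^1_{k,m}(\G_0(t))$ is spanned by $\delta_1(\varphi)$ and $\delta_t(\varphi)$, so it has dimension at most $2$. Both conclusions should reduce to the non-vanishing of a single scalar attached to $\varphi$, namely the eigenvalue of $(Tr')^2$ on the line $\C_\infty\varphi$ (or an equivalent quantity involving $\U_t^2$).

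For the direct sum statement I would invoke Theorem \ref{ThmCriterionDirSum}: the splitting $S^1_{k,m}(\G_0(t)) = \mathrm{Old}\oplus\mathrm{New}$ is equivalent to the invertibility of $\D = Id - t^{k-2m}(Tr')^2$ on $S^1_{k,m}(GL_2(\mathcal{O}))$. Because this space is one-dimensional, $(Tr')^2(\varphi)=\lambda\varphi$ for some $\lambda\in\C_\infty$, and $\D$ is invertible if and only if $\lambda\neq t^{2m-k}$. The main computational task is therefore to exclude this single equality; I would do so either by writing $(Tr')^2$ explicitly in terms of Fricke involutions and traces and using the cocycle description of $\varphi$ \`a la Teitelbaum, or by translating $(Tr')^2$ into a combination of $\U_t$ and $\T_t$ via the standard identities relating degeneracy maps, trace maps and Hecke operators, thereby reading $\lambda$ off the known matrix of $\U_t$ on $S^1_{k,m}(\G_0(t))$ computed in \cite{BV2}.

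For the injectivity of $\T_t$ I would apply the criterion of Proposition \ref{KerTPn}: $\varphi\in Ker(\T_t)$ iff $\delta_1(\varphi)\in Ker(\U_t^2)$. Using the identities between $\U_t$, $\delta_1$, $\delta_t$ and the trace, I would write $\U_t$ restricted to $\mathrm{Old}$ as an explicit $2\times 2$ matrix in the basis $\{\delta_1(\varphi),\delta_t(\varphi)\}$; computing $\U_t^2(\delta_1(\varphi))$ then reduces to an identity whose vanishing is equivalent to the same scalar condition $\lambda = t^{2m-k}$ appearing in the direct sum argument. Hence the scalar computation above settles both statements simultaneously; as a sanity check, the direct sum conclusion combined with the definition of newforms via $Ker(Tr)$ would force any element in $Ker(\T_t)\subset S^1_{k,m}(GL_2(\mathcal{O}))$ to lie in the trivial intersection of the two summands, giving an alternative route to injectivity once the splitting is known.

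The main obstacle, and the step on which the whole argument rests, is the explicit identification of the scalar $\lambda$ (equivalently, the eigenvalue of $\U_t^2$ on $\delta_1(\varphi)$). I expect to handle this by combining the linear-algebra description of $\U_t$ from \cite{BV3} with the Fricke-involution formula for $Tr'$, so that the required inequality becomes a statement about the entries of a matrix which can be pinned down from the assumption $\dim S^1_{k,m}(GL_2(\mathcal{O}))=1$ alone.
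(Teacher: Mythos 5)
Your reduction is the same as the paper's---Theorem \ref{ThmCriterionDirSum} for the splitting, Proposition \ref{KerTPn} for injectivity, and the observation that $Ker(\D)\subseteq Im(\delta_1)$ so that everything happens on the one--dimensional space $Im(\delta_1)$---but the proposal has two genuine problems. The first is that the two statements do \emph{not} reduce to the same scalar condition. Write $\T_t\varphi=\mu\varphi$ on the one--dimensional space. Then $(Tr')^2(\delta_1\varphi)=t^{2m-2k}\T_t^2\varphi$, so $\D(\delta_1\varphi)=0$ is equivalent to $\mu=\pm t^{k/2}$, whereas $\U_t^2(\delta_1\varphi)=0$ is equivalent, via Proposition \ref{KerTPn}, to $\mu=0$. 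These are independent non-equalities, each needing its own argument; your claim that the vanishing of $\U_t^2(\delta_1\varphi)$ is equivalent to the condition $\lambda=t^{2m-k}$ from the direct-sum argument is false. For the same reason your ``sanity check'' fails: the splitting is perfectly compatible with $\T_t\varphi=0$ (in that case $\U_t$ is nilpotent on the old space, but neither $\delta_1\varphi$ nor $\delta_t\varphi$ is new, since $Tr(\delta_1\varphi)=\varphi\neq0$ and $Tr'(\delta_t\varphi)=t^{2m-k}\varphi\neq0$), so the direct sum does not imply injectivity of $\T_t$.

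The second, and more serious, problem is that the step you defer---``pinning down'' the relevant scalar from the matrix of $\U_t$---\emph{is} the entire content of both theorems, and the paper does not establish it by computing an eigenvalue. Instead it chooses a generator $\underline{a}$ of $Ker(MA)=Im(\delta_1)$ with entries in $\F_p$ (possible because $MA$ has entries in $\F_p$), notes that $\underline{p(t)}=MD\,\underline{a}$ is a vector of polynomials whose monomials $t^{s_1},\dots,t^{s_n}$ have pairwise distinct degrees, and then analyses the systems $MA\,\underline{a}=0$ together with $MD\,\underline{p(t)}=0$ (for Theorem \ref{ThmInj}) or $TF\,\underline{p(t)}=0$ (for Theorem \ref{ThmDirSum}) coefficient by coefficient via the identity principle for polynomials. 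The symmetry of $M$ recorded in \eqref{spam} then produces an iterative cascade $a_n=0\Rightarrow a_1=a_{n-1}=0\Rightarrow a_2=a_{n-2}=0\Rightarrow\cdots\Rightarrow\underline{a}=0$. This interplay between the constant ($\F_p$-valued) matrix $MA$ and the $t$-graded matrix $MD$ is the key idea, and nothing in your plan supplies a substitute for it; as written, the proposal is a correct framing of the problem rather than a proof.
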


\section{Newforms and oldforms}\label{SecNewOld}
Here we define oldforms and newforms for a general prime level $P_d$; most of the formulas and definitions are straightforward and come from computations on Hecke operators and trace maps
(defined in \cite[Section 3]{Vi}) similar to the ones presented in \cite{BV2}, hence we often only provide the outcome and refer the reader to
those papers for the missing details.

\subsection{Degeneracy maps and oldforms}\label{SecDeg} Let $\mathfrak{m}$ be any ideal in $\mathcal{O}$ and consider the spaces of Drinfeld cusp forms 
$S^1_{k,m}(\G_0(\mathfrak{m}))$ and $S^1_{k,m}(\G_0(\mathfrak{m}P_d))$ of levels $\mathfrak{m}$ and $\mathfrak{m}P_d$ respectively.
We have two maps which produce {\em oldforms} in $S^1_{k,m}(\G_0(\mathfrak{m}P_d))$:
\begin{align*}
S^1_{k,m}(\mathfrak{m}) & \to S^1_{k,m}(\G_0(\mathfrak{m}P_d))\\
\delta_1\varphi& =\varphi \\
\delta_{P_d}\varphi&= (\varphi\mf \matrix{P_d}{0}{0}{1})(z)=P_d^m\varphi(P_d z)
\end{align*}

\begin{prop}\label{PropDeltaInj}
Assume that $(P_d)$ does not divide $\mathfrak{m}$, then the map 
\begin{align*}
(\delta_1,\delta_{P_d}): S^1_{k,m}(\G_0(\mathfrak{m}))^2 & \to S^1_{k,m}(\G_0(\mathfrak{m}P_d))\\
(\varphi_1 , \varphi_2) & \mapsto \delta_1\varphi_1 + \delta_{P_d}\varphi_2
\end{align*}
is injective.
\end{prop}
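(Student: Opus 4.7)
The plan is to assume $\delta_1\varphi_1+\delta_{P_d}\varphi_2=0$ in $S^1_{k,m}(\G_0(\mathfrak{m}P_d))$ and to deduce $\varphi_1=\varphi_2=0$.

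First I would observe that $\delta_1$ is simply the natural inclusion, so $\delta_1\varphi_1$ coincides pointwise with $\varphi_1$ and is automatically $\G_0(\mathfrak{m})$-invariant. The identity then forces $\delta_{P_d}\varphi_2=P_d^m\varphi_2(P_d\,\cdot)$ to be $\G_0(\mathfrak{m})$-invariant as well, i.e.\ to lie in the smaller subspace $S^1_{k,m}(\G_0(\mathfrak{m}))\subset S^1_{k,m}(\G_0(\mathfrak{m}P_d))$. Thus the proposition reduces to showing that if $\varphi\in S^1_{k,m}(\G_0(\mathfrak{m}))$ satisfies $\delta_{P_d}\varphi\in S^1_{k,m}(\G_0(\mathfrak{m}))$, then $\varphi=0$: applied to $\varphi_2$ this gives $\varphi_2=0$, whence $\varphi_1=0$ from the original identity.

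To establish the reduction I would pass to the $u$-expansion at $\infty$. Writing $\varphi=\sum_{n\geq 1}b_n u^n$ and exploiting the $\F_q$-linearity of the Carlitz $P_d$-isogeny, the uniformizer transforms as $u(P_d z)=u^{q^d}+O\bigl(u^{q^d+(q^d-q^{d-1})}\bigr)$, so $\delta_{P_d}\varphi$ vanishes at $\infty$ to order at least $q^d$. The extra $\G_0(\mathfrak{m})$-invariance of $\delta_{P_d}\varphi$ is then converted, by conjugation, into a self-similarity on $\varphi$: for $\gamma=\matrix{a}{b}{c}{d}\in\G_0(\mathfrak{m})$ one computes
\[ \matrix{P_d}{0}{0}{1}\gamma\matrix{P_d}{0}{0}{1}^{-1}=\matrix{a}{P_d b}{c/P_d}{d}, \]
which lies in $GL_2(K_\infty)\setminus GL_2(\mathcal{O})$ whenever $P_d\nmid c$ — a situation that does occur since $(P_d,\mathfrak{m})=1$. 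Invariance of $\delta_{P_d}\varphi$ under $\gamma$ translates, through the $\mf$-action, into invariance of $\varphi$ under this conjugated matrix, which, combined with the already-known $\G_0(\mathfrak{m})$-modularity of $\varphi$, yields a functional equation of the form $\varphi(z)=c\,\varphi(P_d^{2}z)$ for some $c\in\C_\infty^*$. Iterating gives $\varphi(z)=c^j\varphi(P_d^{2j}z)$ for every $j\geq 1$, and since the $u$-expansion of the right-hand side starts at $u^{q^{2jd}}$, letting $j\to\infty$ forces every coefficient $b_n$ of $\varphi$ to vanish, so $\varphi=0$.

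The main obstacle is the extraction of the functional equation $\varphi(z)=c\,\varphi(P_d^{2}z)$ in full generality. For $\mathfrak{m}=(1)$ it drops out cleanly from the Fricke matrix $\matrix{0}{-1}{1}{0}\in GL_2(\mathcal{O})$, whose conjugate by $\matrix{P_d}{0}{0}{1}$ has determinant $1$, so that the modularity of $\varphi$ under this element combines with the new invariance to give the equation at once. For an arbitrary $\mathfrak{m}$ coprime to $P_d$ one must carefully select $\gamma\in\G_0(\mathfrak{m})$ with bottom-left entry coprime to $P_d$ and chase the interaction of the two modularity conditions; ensuring that this produces a non-trivial functional equation in the absence of a level-$\mathfrak{m}$ Fricke involution is the chief technical difficulty.
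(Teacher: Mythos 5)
Your reduction is sound, and for $\mathfrak{m}=(1)$ your argument does work: $\delta_{P_d}\varphi_2=-\varphi_1$ is forced to be $\G_0(\mathfrak{m})$-invariant, the conjugation formula is correct, and the product of $\matrix{0}{-1}{1}{0}$ with its conjugate $\matrix{0}{-P_d}{1/P_d}{0}$ is $\mathrm{diag}(-P_d,-1/P_d)$, yielding $\varphi=(-P_d)^{k}\varphi(P_d^{2}\cdot)$; iterating and comparing $u$-valuations then kills $\varphi$. Note that this is a genuinely different route from the paper's, which simply runs the proof of \cite[Proposition 3.1]{BV2} on the Bruhat--Tits tree at $P_d$, reading off injectivity from the harmonic-cocycle description rather than from $u$-expansions.

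The genuine gap is exactly where you flag it, and it matters because the proposition is stated (and used, e.g.\ in Proposition \ref{KerTPn}) for arbitrary $\mathfrak{m}$ prime to $(P_d)$. For $\mathfrak{m}\neq(1)$, knowing that $\varphi$ is invariant under $\G_0(\mathfrak{m})$ and under all conjugates $\matrix{a}{P_db}{c/P_d}{d}$ does not by itself produce an element acting as $z\mapsto P_d^{2}z$: one needs the group generated by $\G_0(\mathfrak{m})$ and its $\mathrm{diag}(P_d,1)$-conjugate to be large, which is essentially the Ihara/Serre amalgam statement at the place $P_d$ --- i.e.\ precisely the tree-theoretic input your approach was meant to avoid. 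Without it, nothing a priori excludes a nonzero common invariant. You can close the gap while staying in your framework by using the Hecke relations instead of a functional equation: applying $\U_{P_d}$ to $\delta_1\varphi_1+\delta_{P_d}\varphi_2=0$ and invoking \eqref{EqDeltaT} and \eqref{EqDeltaU} gives $P_d^{k-m}\delta_{P_d}\varphi_1=\delta_1(\T_{P_d}\varphi_1)$, so $\delta_{P_d}\varphi_1$ again lies in $S^1_{k,m}(\G_0(\mathfrak{m}))$; iterating shows $P_d^{jk}\varphi_1(P_d^{j}\cdot)$ is a level-$\mathfrak{m}$ cusp form for every $j$, with order of vanishing at $\infty$ at least $q^{jd}$, while the $u$-valuations of nonzero elements of the finite-dimensional space $S^1_{k,m}(\G_0(\mathfrak{m}))$ are bounded. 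This forces $\varphi_1=0$, and then $\varphi_2=0$ since $\delta_{P_d}$ is visibly injective on $u$-expansions.
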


\begin{proof}
The proof works exactly as in \cite[Proposition 3.1]{BV2}, just replace the tree $\Tr_t$ used there with the Bruhat-Tits tree $\Tr_{P_d}$ at $P_d$ 
associated with $GL_2(K_{P_d})$ ($K_{P_d}$ being the completion of $K$ at the prime $P_d$).
\end{proof}

\begin{defin}
The space of oldforms of level $\mathfrak{m}$, denoted by $S^{1,old}_{k,m}(\G_0(\mathfrak{m}))$, is the subspace of 
$S^1_{k,m}(\G_0(\mathfrak{m}))$ generated by the set $\{ (\delta_1,\delta_{P_d})(\varphi_1,\varphi_2)\,:\,(\varphi_1,\varphi_2)\in 
S^1_{k,m}(\G_0(\mathfrak{m}/(P_d))^2\,,\ for\ all\ (P_d)|\mathfrak{m}\}$.
\end{defin}

Let $\varphi\in S^1_{k,m}(\G_0(\mathfrak{m}))$ and assume that $P_d$ does not divide $\mathfrak{m}$ so that we have ``different'' Hecke
 operators $\T_{P_d}$ and $\U_{P_d}$ on the levels $\mathfrak{m}$ and $\mathfrak{m}P_d$ respectively. Then the relations between the
 maps $\delta_1$ and $\delta_{P_d}$ and the Hecke operators are the following:
\begin{equation}\label{EqDeltaT}
\delta_1(\T_{P_d}\varphi)=P_d^{k-m}\delta_{P_d}\varphi+\U_{P_d}(\delta_1\varphi)
\end{equation}
\begin{align}\label{EqDeltaU}
\U_{P_d}(\delta_{P_d}\varphi)& =P_d^{k-m} \sum_{\begin{subarray}{c}Q\in \mathcal{O}\\ \deg{Q}<d  \end{subarray}} (\varphi \mf \matrix{P_d}{0}{0}{1} \matrix{1}{Q}{0}{P_d} )(z) \\ \nonumber
& = P_d^{k-m} \sum_{\begin{subarray}{c}Q\in \mathcal{O}\\ \deg{Q}<d  \end{subarray}} (\varphi \mf \matrix{1}{Q}{0}{1} \matrix{P_d}{0}{0}{P_d} )(z) \\ \nonumber
& = P_d^m \sum_{\begin{subarray}{c}Q\in \mathcal{O}\\ \deg{Q}<d  \end{subarray}}  \varphi(z) =0.
\end{align}

\begin{prop} \label{PropEigenvalues}
Assume that $(P_d)$ does not divide $\mathfrak{m}$, then
\[ \{\mathrm{Eigenvalues}\ \mathrm{of}\ {\U_{P_d}}_{|S^{1,old}_{k,m}(\G_0(\mathfrak{m}P_d))} \} = 
\{ \mathrm{Eigenvalues}\ \mathrm{of}\ \T_{P_d}\} \cup \{0\}.  \]
\end{prop}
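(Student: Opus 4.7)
The plan is to reduce the statement to a block matrix computation, using the injectivity of the degeneracy pair together with the commutation relations (\ref{EqDeltaT}) and (\ref{EqDeltaU}) just established. Set $V := S^1_{k,m}(\G_0(\m))$ and consider the subspace $V^{old} := \delta_1 V + \delta_{P_d}V$ of $S^1_{k,m}(\G_0(\m P_d))$.

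First, Proposition \ref{PropDeltaInj} gives $V^{old} = \delta_1 V \oplus \delta_{P_d} V$, so that if $\{e_1,\dots,e_n\}$ is a basis of $V$, then $\{\delta_1 e_1,\dots,\delta_1 e_n,\delta_{P_d}e_1,\dots,\delta_{P_d}e_n\}$ is a basis of $V^{old}$. Let $T$ denote the matrix of $\T_{P_d}$ on $V$ in the basis $\{e_j\}$.

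Next, rearranging (\ref{EqDeltaT}) produces
\[ \U_{P_d}(\delta_1 e_j) = \delta_1(\T_{P_d} e_j) - P_d^{k-m}\delta_{P_d}e_j, \]
while (\ref{EqDeltaU}) gives $\U_{P_d}(\delta_{P_d} e_j)=0$. Reading these relations off in the ordered basis above shows that the matrix of $\U_{P_d}$ restricted to $V^{old}$ is the block lower triangular matrix
\[ M = \begin{pmatrix} T & 0 \\ -P_d^{k-m}I_n & 0 \end{pmatrix}. \]

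Finally, the block triangular structure of $M$ yields $\det(xI-M)=x^n\det(xI-T)$, so the spectrum of $M$ is exactly the spectrum of $T$ together with $\{0\}$ (the eigenvalue $0$ is actually attained as long as $n=\dim V\geq 1$, otherwise both sides of the claimed equality are empty). This is the desired conclusion. The whole argument is essentially algebraic bookkeeping; there is no serious obstacle once the commutation relations and the injectivity of $(\delta_1,\delta_{P_d})$ are in hand.
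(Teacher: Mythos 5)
Your proof is correct and rests on exactly the same ingredients as the paper's: the relations \eqref{EqDeltaT} and \eqref{EqDeltaU} together with the injectivity of $(\delta_1,\delta_{P_d})$, merely repackaged as a block-matrix/characteristic-polynomial computation ($\det(xI-M)=x^n\det(xI-T)$) instead of the paper's element-wise argument, which manipulates a single $\U_{P_d}$-eigenform and then exhibits $\delta_1\varphi-\frac{P_d^{k-m}}{\lambda}\delta_{P_d}\varphi$ as an explicit eigenvector for each nonzero $\T_{P_d}$-eigenvalue $\lambda$. Your version has the minor merit of treating the degenerate cases ($\lambda=0$, or an eigenform with vanishing $\delta_1$-component) uniformly, but it is not a genuinely different route.
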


\begin{proof}
Let $(\delta_1,\delta_{P_d})(\varphi,\psi)$ be an old eigenform for $\U_{P_d}$ of eigenvalue $\lambda$. Then
\begin{align*}
\lambda (\delta_1,\delta_{P_d})(\varphi,\psi) & = \U_{P_d}((\delta_1,\delta_{P_d})(\varphi,\psi))\\
& = \U_{P_d}(\delta_1\varphi)\\
& = \delta_1(\T_{P_d})-P_d^{k-m}\delta_{P_d}(\varphi)\\
& = (\delta_1,\delta_{P_d})(\T_{P_d}\varphi,-P_d^{k-m}\varphi)
\end{align*}
implies $\T_{P_d}\varphi=\lambda \varphi$ because of the injectivity of $(\delta_1,\delta_{P_d})$.

If $\T_{P_d}\varphi =\lambda \varphi$ with $\lambda\neq 0$ we have
\begin{align*}
\U_{P_d}((\delta_1,\delta_{P_d})(\varphi,-\frac{P_d^{k-m}}{\lambda}\varphi)) & = \U_{P_d}(\delta_1\varphi)\\
& = \delta_1(\T_{P_d}\varphi)-P_d^{k-m}\delta_{P_d}\varphi\\
& = \lambda \delta_1\varphi -P_d^{k-m}\delta_{P_d}\varphi\\
& = \lambda(\delta_1,\delta_{P_d})(\varphi,-\frac{P_d^{k-m}}{\lambda}\varphi).\qedhere
\end{align*}
\end{proof}

\noindent We have just seen that the behaviour of $\U_{P_d}$ on oldforms is analogous to the classical case: the eigenvalues for $\U_{P_d}$ 
verify equations like $X^2-\lambda X=0$ where $\lambda$ is a nonzero eigenvalue for $\T_{P_d}$ (in the classical case the equation was
$X^2-\lambda X+p^{k-1}=0$ which reduces to our one modulo $p$, see \cite[Section 4]{GM1}).

\begin{rem}\label{RemDiagOld}
Let $\varphi$ be an eigenvector for $\T_{P_d}$ of eigenvalue $\lambda$, then the matrix for the action of $\U_{P_d}$ on the couple 
$\{ \delta_1\varphi , \delta_{P_d}\varphi \}$ is $\left(\begin{array}{cc} \lambda & -P_d^{k-m} \\ 0 & 0 \end{array}\right)$. Hence
it is easy to see that, assuming $(P_d)$ does not divide $\m$, the operator $\U_{P_d}$ is diagonalizable on oldforms 
if and only if the operators
$\T_{P_d}$ are diagonalizable at lower levels and are injective. We believe $\U_{P_d}$ is diagonalizable in odd characteristic
(and, for $P_1=t$, we provided evidence for it in \cite{BV1} and \cite{BV3}) and this motivates our investigation on the injectivity
of the Hecke operators $\T_{P_d}$.
\end{rem}

The next proposition describes $Ker(\T_{P_d})$ and will be crucial in the proof of Theorem \ref{ThmInj}.

\begin{prop}\label{KerTPn}
Let $\varphi\in S^1_{k,m}(\G_0(\mathfrak{m}))$ such that $P_d\nmid \mathfrak{m}$, then $\varphi\in Ker(\T_{P_d})$ if and only if $\delta_1(\varphi)\in Ker(\U_{P_d}^2)$.
\end{prop}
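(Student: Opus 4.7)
The plan is to use the two fundamental identities \eqref{EqDeltaT} and \eqref{EqDeltaU} relating the degeneracy maps with the Hecke operators, combined with the injectivity of $(\delta_1,\delta_{P_d})$ from Proposition \ref{PropDeltaInj}, to reduce the equivalence to a trivial linear-algebra statement.

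First I would record what \eqref{EqDeltaT} and \eqref{EqDeltaU} give together. Rewriting \eqref{EqDeltaT} as
\[ \U_{P_d}(\delta_1\varphi) = \delta_1(\T_{P_d}\varphi)-P_d^{k-m}\delta_{P_d}\varphi = (\delta_1,\delta_{P_d})\bigl(\T_{P_d}\varphi,\,-P_d^{k-m}\varphi\bigr), \]
and applying $\U_{P_d}$ a second time, the $\delta_{P_d}\varphi$ term dies because of \eqref{EqDeltaU}, so
\[ \U_{P_d}^2(\delta_1\varphi) = \U_{P_d}\bigl(\delta_1(\T_{P_d}\varphi)\bigr). \]
Now applying \eqref{EqDeltaT} again, this time to the cusp form $\T_{P_d}\varphi$, gives
\[ \U_{P_d}^2(\delta_1\varphi) = \delta_1(\T_{P_d}^2\varphi) - P_d^{k-m}\delta_{P_d}(\T_{P_d}\varphi) = (\delta_1,\delta_{P_d})\bigl(\T_{P_d}^2\varphi,\,-P_d^{k-m}\T_{P_d}\varphi\bigr). \]

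From here the two directions follow immediately. If $\T_{P_d}\varphi=0$, then the right-hand side vanishes and hence $\delta_1\varphi\in Ker(\U_{P_d}^2)$. Conversely, if $\U_{P_d}^2(\delta_1\varphi)=0$, then Proposition \ref{PropDeltaInj} (which needs $P_d\nmid \mathfrak{m}$, exactly our hypothesis) forces both components of the pair to vanish; since $P_d^{k-m}\neq 0$, the second component vanishing already yields $\T_{P_d}\varphi=0$.

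There is no real obstacle: the only subtlety is remembering that \eqref{EqDeltaU} is what kills the ``cross term'' on the second application of $\U_{P_d}$, and that the coprimality assumption $P_d\nmid\mathfrak{m}$ is needed twice, once to make $\U_{P_d}$ and $\T_{P_d}$ available as distinct operators at the two levels, and once to invoke the injectivity statement of Proposition \ref{PropDeltaInj}.
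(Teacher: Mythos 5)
Your proof is correct and follows essentially the same route as the paper's: both directions rest on iterating \eqref{EqDeltaT}, killing the cross term via \eqref{EqDeltaU}, and invoking the injectivity of $(\delta_1,\delta_{P_d})$ from Proposition \ref{PropDeltaInj} for the converse. The only cosmetic difference is that the paper handles the forward direction by writing $\U_{P_d}(\delta_1\varphi)=-P_d^{k-m}\delta_{P_d}\varphi$ directly from $\T_{P_d}\varphi=0$, whereas you specialize the general identity $\U_{P_d}^2(\delta_1\varphi)=(\delta_1,\delta_{P_d})(\T_{P_d}^2\varphi,-P_d^{k-m}\T_{P_d}\varphi)$; the content is identical.
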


\begin{proof}
By \eqref{EqDeltaT}, for any $\varphi\in Ker(\T_{P_d})$ one has $\U_{P_d}^2(\delta_1\varphi)=-P_d^{k-m}\U_{P_d}(\delta_{P_d}\varphi)=0$.\\
Now let $\varphi\in S^1_{k,m}(\G_0(\mathfrak{m}))$ be such that $\delta_1\varphi\in Ker(\U_{P_d}^2)$. Then
\begin{align*}  
0 & = \U_{P_d}^2(\delta_1\varphi) = \U_{P_d}(\delta_1(\T_{P_d}\varphi)-P_d^{k-m}\delta_{P_d}\varphi)\\
 & = \delta_1(\T_{P_d}^2\varphi)-P_d^{k-m}\delta_{P_d}(\T_{P_d}\varphi)\\
 & = \delta(\T_{P_d}^2\varphi, -P_d^{k-m}(\T_{P_d}\varphi)) .
\end{align*}
Since $\delta$ is injective we have $\T_{P_d}\varphi=0$.
\end{proof}

\subsection{Trace maps and newforms}\label{SecTr}
From now on we take $\mathfrak{m}=1$ and denote $(\delta_1,\delta_{P_d}):S^1_{k,m}(GL_2(\mathcal{O}))^2\rightarrow S^1_{k,m}(\G_0(P_d))$ 
simply by $\delta$. The reason for this is the crucial role played by the Fricke involution in the definition of the twisted trace and of 
newforms (see below): the trace map should be easily generalizable to any level $\mathfrak{m}$ just considering representatives for
$\G_0(\mathfrak{m}P_d)\backslash \G_0(\mathfrak{m})$ but we are still looking for the correct generalization of the Fricke involution.
We recall that a system of coset representative for $\G_0(P_d)\backslash GL_2(\mathcal{O})$ is 
\[  R=\left\{ Id, \matrix{0}{-1}{1}{Q}\ \mathrm{s.t.}\ Q\in \mathcal{O}\ \mathrm{and}\ \deg{Q}<d\right\}. \]
For details on some of the maps defined in this section see \cite{Vi}. 

\begin{defin}\label{DefFrTrTr'} We have the following maps defined on $S^1_{k,m}(\G_0(P_d))$:
\begin{itemize}
\item the {\em Fricke involution}, which preserves the space $S^1_{k,m}(\G_0(P_d))$, is represented by the matrix 
\[ \g_{P_d}:= \matrix{0}{-1}{P_d}{0}  \]
and defined by $\varphi^{Fr}=( \varphi\mf \g_{P_d})$;
\item the {\em trace map} is defined by
\begin{align*}  
Tr: S^1_{k,m}(\G_0(P_d)) & \to S^1_{k,m}(GL_2(\mathcal{O})) \\
 \varphi &\mapsto \sum_{\g\in R} (\varphi\mf \g)(z);
\end{align*}
\item the {\em twisted trace map} is defined by
\begin{align*}
Tr':S^1_{k,m}(\G_0(P_d)) & \to S^1_{k,m}(GL_2(\mathcal{O})) \\
\varphi &\mapsto Tr(\varphi^{Fr}).
\end{align*}
\end{itemize}
\end{defin}

We list here many useful formulas expressing the relations between these maps, the Hecke operators and the maps $\delta_1$ and $\delta_{P_d}$,
the proofs rely on matrix decomposition and on the definitions of the various maps and are similar to those in \cite[Section 3]{BV2}. Please note
that the first three formulas hold for cusp forms of level $P_d$, while the following ones hold for cusp forms of level 1. 

\noindent Let $\psi\in S^1_{k,m}(\G_0(P_d))$, then we have
\begin{align}\label{EqFrFr}
(\psi^{Fr})^{Fr} &= \left( (\psi \mf \matrix{0}{-1}{P_d}{0})(z)\right)^{Fr}\\ \nonumber
& = (\psi \mf \matrix{0}{-1}{P_d}{0}\matrix{0}{-1}{P_d}{0})(z)= P_d^{2m-k}\psi ;
\end{align}

\begin{align}\label{EqTr}
Tr(\psi) & = \psi + \sum_{\begin{subarray}{c} Q\in \mathcal{O}\\ \deg{Q}<d \end{subarray}} \left( \psi\mf \matrix{0}{-1}{P_d}{0}\matrix{1}{Q}{0}{P_d}\matrix{\frac{1}{P_d}}{0}{0}{\frac{1}{P_d}} \right) (z)\\ \nonumber 
& = \psi + P_d^{-m}\U_{P_d}(\psi^{Fr}) ;
\end{align}

\begin{align}\label{EqTr'}
Tr'(\psi)= \psi^{Fr}+P_d^{m-k}\U_{P_d}(\psi) .
\end{align}

\noindent Now let $\varphi\in S^1_{k,m}(GL_2(\mathcal{O}))$, then we have

\begin{align}\label{EqDelta1Fr}
(\delta_1\varphi)^{Fr} & = (\varphi\mf \matrix {0}{-1}{P_d}{0})(z)\\ \nonumber
& = (\varphi \mf \matrix{0}{-1}{1}{0}\matrix{P_d}{0}{0}{1})(z) =\delta_{P_d}\varphi ;
\end{align}

\begin{align}\label{EqDeltaFr}
(\delta_{P_d}\varphi)^{Fr} & =(\varphi \mf \matrix{P_d}{0}{0}{1}\matrix{0}{-1}{P_d}{0})(z) \\ \nonumber
 & = (\varphi\mf \matrix{0}{-1}{1}{0}\matrix{P_d}{0}{0}{P_d})(z) = P_d^{2m-k}\delta_1\varphi ;
\end{align}

\begin{align}
\U_{P_d}((\delta_1\varphi)^{Fr})=0 ;
\end{align}

\begin{align}
\U_{P_d}((\delta_{P_d}\varphi)^{Fr})=P_d^{2m-k}[\delta_1\T_{P_d}\varphi-P_d^{k-m}\delta_{P_d}\varphi] ;
\end{align}

\begin{align}\label{EqTrD1}
Tr(\delta_1\varphi)=\sum_{\g\in R} \varphi= \varphi ;
\end{align}

\begin{align}\label{EqTrD}
Tr(\delta_{P_d}\varphi) & =( \sum_{\g\in R} \varphi \mf \matrix{P_d}{0}{0}{1}\g )(z) \\ \nonumber 
& = (\delta_{P_d}\varphi )(z) + ( \sum_{\begin{subarray}{c} Q\in \mathcal{O} \\ \deg{Q}<d \end{subarray}} \matrix{0}{-P_d}{1}{Q}) (z)\\ \nonumber
& = P_d^{m-k}\T_{P_d}\varphi .
\end{align}

As an application we have an explicit description of the kernel of the Hecke operator $\U_{P_d}$.

\begin{prop}\label{PropKerUt}
We have $Ker(\U_{P_d})=Im(\delta_{P_d})$.
\end{prop}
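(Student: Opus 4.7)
The plan is to prove the two inclusions separately, leveraging the formulas established in the preceding list. The containment $\operatorname{Im}(\delta_{P_d}) \subseteq \operatorname{Ker}(\U_{P_d})$ is essentially already in the text: equation \eqref{EqDeltaU} directly shows that $\U_{P_d}(\delta_{P_d}\varphi) = 0$ for every $\varphi \in S^1_{k,m}(GL_2(\mathcal{O}))$, so nothing further is needed for this direction.

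For the reverse inclusion, I would start with an arbitrary $\psi \in \operatorname{Ker}(\U_{P_d})$ and try to exhibit a level-$1$ form whose image under $\delta_{P_d}$ is $\psi$. The natural candidate is produced by the twisted trace: by formula \eqref{EqTr'}, vanishing of $\U_{P_d}(\psi)$ gives
\[ Tr'(\psi) = \psi^{Fr}. \]
The left-hand side lies in $S^1_{k,m}(GL_2(\mathcal{O}))$ by definition of $Tr'$, so $\psi^{Fr}$ is (the image under $\delta_1$ of) a level-$1$ cusp form; call this level-$1$ form $\varphi := Tr'(\psi)$.

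Next I would apply the Fricke involution a second time to recover $\psi$. On one hand, \eqref{EqFrFr} yields $(\psi^{Fr})^{Fr} = P_d^{2m-k}\psi$. On the other hand, using the identification $\psi^{Fr} = \delta_1\varphi$ together with \eqref{EqDelta1Fr}, we obtain
\[ (\psi^{Fr})^{Fr} = (\delta_1\varphi)^{Fr} = \delta_{P_d}\varphi. \]
Comparing gives $\psi = P_d^{k-2m}\delta_{P_d}\varphi = \delta_{P_d}(P_d^{k-2m}\varphi)$, so $\psi \in \operatorname{Im}(\delta_{P_d})$, completing the argument.

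The plan is essentially assembly of existing formulas, so there is no serious obstacle; the only point requiring a little care is the identification of $Tr'(\psi)$, a priori a level-$1$ form, with $\psi^{Fr}$ viewed inside $S^1_{k,m}(\Gamma_0(P_d))$ via $\delta_1$, so that the second application of Fricke makes sense and produces an element of $\operatorname{Im}(\delta_{P_d})$. Once this identification is made explicit, the two applications of Fricke (one via \eqref{EqFrFr}, one via \eqref{EqDelta1Fr}) close the loop.
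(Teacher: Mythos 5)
Your proof is correct and follows essentially the same route as the paper: the inclusion $Im(\delta_{P_d})\subseteq Ker(\U_{P_d})$ via \eqref{EqDeltaU}, then for the converse the observation that $Tr'(\psi)=\psi^{Fr}$ is a level-$1$ form, with the preimage $P_d^{k-2m}\psi^{Fr}$ recovered by applying Fricke a second time. You merely spell out the ``easy to check'' step via \eqref{EqFrFr} and \eqref{EqDelta1Fr}, which is exactly the intended verification.
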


\begin{proof} We have already seen that $Ker(\U_{P_d})\supseteq Im(\delta_{P_d})$. Now let $\varphi\in Ker(\U_{P_d})$
and note that, by \eqref{EqTr'}, $Tr'(\varphi)=\varphi^{Fr}\in S^1_{k,m}(GL_2(\mathcal{O}))$. Then it is easy to check that, with
$\psi:=P_d^{k-2m}\varphi^{Fr}\in S^1_{k,m}(GL_2(\mathcal{O}))$, one has $\delta_{P_d}(\psi) = \varphi$.
\end{proof}

\begin{defin}\label{DefNew}
The space of newforms of level $P_d$, denoted by $S^{1,new}_{k,m}(\G_0(P_d))$ is given by $Ker(Tr)\cap Ker(Tr')$.
\end{defin}

\begin{rem}\label{RemUStable}
From formulas \eqref{EqDeltaT} and \eqref{EqDeltaU}, it is easy to see that $\U_{P_d}$ preserves the space of oldforms (of any level).
For any newform $\varphi$ of level $P_d$ we have $Tr(\varphi)=Tr'(\varphi)=0$, hence \eqref{EqTr'} yields
$\U_{P_d}(\varphi)=-P_d^{k-m}\varphi^{Fr}$. Thus it immediately follows that $Tr(\U_{P_d}(\varphi))=Tr'(\U_{P_d}(\varphi))=0$, i.e.
$\U_{P_d}$ preserves newforms as well.
\end{rem}

\begin{rem}
The trace alone is not enough to isolate newforms: indeed let $\varphi\in S^1_{k,m}(GL_2(\mathcal{O}))$ be such that $\T_{P_d}\varphi=\lambda \varphi$
with $\lambda\neq 0$. Then one can check that
\[ \psi_1:=\delta_1\varphi-\frac{P_d^{k-m}}{\lambda}\delta_{P_d}\varphi\ \in Ker(Tr)\]
and
\[  \psi_2:=\frac{P_d^{k-m}}{\lambda}\delta_1\varphi-P_d^{k-2m}\delta_{P_d}\varphi\ \in Ker(Tr') \]
(recall that, by the proof of Proposition \ref{PropEigenvalues}, $\psi_1$ is an $\U_{P_d}$-eigenvector of eigenvalue $\lambda$).
In general, $\psi_1\notin Ker(Tr')$ and $\psi_2\notin Ker(Tr)$ unless $\lambda=\pm P_d^{k/2}$.
\end{rem}

The values $\pm P_d^{k/2}$ (i.e. the {\em slope} $\frac{k}{2}$ in the sense of \cite[Definition 3.4 and Remark 3.5]{BV2}) are the
only possible eigenvalues for newforms and we actually believe that they identify newforms, i.e. there are no oldforms with such
eigenvalues (this would have relevant consequences also on other conjectures like the one discussed in Section \ref{SecSpecCase},
see \cite[Remark 5.3]{BV3}).   
 
\begin{prop}\label{PropNewEigenvalues}
Let $\varphi\in S^1_{k,m}(\G_0(P_d))$ be a new $\U_{P_d}$-eigenform of eigenvalue $\lambda$, then $\lambda=\pm P_d^{k/2}$.
\end{prop}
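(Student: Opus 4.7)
The plan is to exploit the newform conditions together with the fact that the Fricke operator squares to a scalar. The first step is to unpack what $\mathrm{Tr}'(\varphi)=0$ says for a $\mathbf{U}_{P_d}$-eigenform. By formula \eqref{EqTr'},
\[ 0=\mathrm{Tr}'(\varphi)=\varphi^{Fr}+P_d^{m-k}\mathbf{U}_{P_d}(\varphi)=\varphi^{Fr}+P_d^{m-k}\lambda\,\varphi, \]
so $\varphi^{Fr}=-P_d^{m-k}\lambda\,\varphi$. This already shows that $\varphi$ is a Fricke-eigenform with a very specific eigenvalue.

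The second step is to iterate the Fricke operator. Applying the previous relation twice and comparing with the involution identity \eqref{EqFrFr}, which gives $(\varphi^{Fr})^{Fr}=P_d^{2m-k}\varphi$, I obtain
\[ P_d^{2m-k}\varphi=(\varphi^{Fr})^{Fr}=-P_d^{m-k}\lambda\,\varphi^{Fr}=P_d^{2m-2k}\lambda^{2}\varphi. \]
Cancelling $\varphi\neq 0$ and the common power of $P_d$ yields $\lambda^{2}=P_d^{k}$, hence $\lambda=\pm P_d^{k/2}$.

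There is essentially no obstacle here once the formulas from Definition \ref{DefFrTrTr'} and \eqref{EqTr'}, \eqref{EqFrFr} are in hand; the main point is recognizing that the condition $\mathrm{Tr}'(\varphi)=0$ together with the $\mathbf{U}_{P_d}$-eigenequation forces $\varphi$ to be a Fricke-eigenform, and that the Fricke involution relation then pins down $\lambda^{2}$. As a consistency check one can redo the computation with $\mathrm{Tr}(\varphi)=0$ via \eqref{EqTr}, which gives $\mathbf{U}_{P_d}(\varphi^{Fr})=-P_d^{m}\varphi$; combined with $\varphi^{Fr}=-P_d^{m-k}\lambda\,\varphi$, this again produces $\lambda^{2}=P_d^{k}$, confirming that the two newform conditions are compatible (and in fact, as suggested by the preceding remark, each one is automatically implied by the other once $\lambda=\pm P_d^{k/2}$).
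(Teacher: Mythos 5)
Your proof is correct, and it is close in spirit to the paper's: both arguments hinge on the observation that the newform condition turns the $\U_{P_d}$-eigenequation into a Fricke-eigenequation, after which a squaring identity forces $\lambda^2=P_d^k$. The difference is which squaring identity does the work. The paper uses \emph{both} halves of the newform condition: $Tr'(\varphi)=0$ gives $\U_{P_d}\varphi=-P_d^{k-m}\varphi^{Fr}$ and $Tr(\varphi)=0$ gives $\U_{P_d}(\varphi^{Fr})=-P_d^{m}\varphi$, whence $\U_{P_d}^2\varphi=P_d^k\varphi$. You instead use only $Tr'(\varphi)=0$ together with the involution identity \eqref{EqFrFr}, computing $(\varphi^{Fr})^{Fr}$ in two ways. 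Your route is marginally more economical and in fact yields a slightly stronger statement: any nonzero $\U_{P_d}$-eigenform lying merely in $Ker(Tr')$ already has eigenvalue $\pm P_d^{k/2}$, and such an eigenform then lies in $Ker(Tr)$ automatically, since by \eqref{EqTr} and linearity
\[ Tr(\varphi)=\varphi+P_d^{-m}\U_{P_d}(\varphi^{Fr})=\varphi-P_d^{-k}\lambda^{2}\varphi=0 , \]
which also substantiates your closing remark. The only step you use tacitly is the $\C_\infty$-linearity of the Fricke operator, $(c\varphi)^{Fr}=c\,\varphi^{Fr}$, needed when you iterate $Fr$ through the scalar $-P_d^{m-k}\lambda$; this is immediate from the definition of the slash operator, so there is no gap.
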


\begin{proof}
By \eqref{EqTr} and \eqref{EqTr'}
\[ \varphi = -P_d^{-m} \U_{P_d}(\varphi^{Fr})\quad \mathrm{and}\quad \varphi^{Fr}=-P_d^{m-k}\U_{P_d}(\varphi).\]
It follows that
\begin{align*}
\lambda^2\varphi & = \lambda(\U_{P_d}\varphi) = \U_{P_d}^2\varphi \\
& = \U_{P_d}(-P_d^{k-m}\varphi^{Fr})\\
& = -P_d^{k-m} \U_{P_d}(\varphi^{Fr})= P_d^k\varphi .
\end{align*}
Hence $\lambda =\pm P_d^{k/2}$.
\end{proof}

The following important criterion is the analog of \cite[Theorem 5.1]{BV3}.

\begin{thm}\label{ThmCriterionDirSum} 
We have a direct sum decomposition
$S^1_{k,m}(\G_0(P_d))=S^{1,old}_{k,m}(\G_0(P_d)) \oplus S^{1,new}_{k,m}(\G_0(P_d))$ if and only if the map 
$\D:=Id-P_d^{k-2m}(Tr')^2$ is bijective.
\end{thm}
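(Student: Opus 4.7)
The plan is to reduce the direct sum condition to the solvability of an explicit linear system on $V:=S^1_{k,m}(GL_2(\mathcal{O}))$ that is governed by $\D$. First I would compute how $Tr$ and $Tr'$ act on the generators $\delta_1\varphi$ and $\delta_{P_d}\varphi$ of $S^{1,old}_{k,m}(\G_0(P_d))$: formulas \eqref{EqTrD1}--\eqref{EqTrD} give $Tr(\delta_1\varphi)=\varphi$ and $Tr(\delta_{P_d}\varphi)=P_d^{m-k}\T_{P_d}\varphi$, while combining \eqref{EqDelta1Fr}--\eqref{EqDeltaFr} with the identity $Tr'(\psi)=Tr(\psi^{Fr})$ yields $Tr'(\delta_1\varphi)=P_d^{m-k}\T_{P_d}\varphi$ and $Tr'(\delta_{P_d}\varphi)=P_d^{2m-k}\varphi$. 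In particular, $Tr'\circ\delta_1=P_d^{m-k}\T_{P_d}$ on $V$, so $P_d^{k-2m}(Tr')^2$ identifies with $P_d^{-k}\T_{P_d}^2$ and $\D$ becomes $Id-P_d^{-k}\T_{P_d}^2$ as a self-map of $V$.

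Given $\psi\in S^1_{k,m}(\G_0(P_d))$, the existence of a decomposition $\psi=\delta_1\varphi_1+\delta_{P_d}\varphi_2+\psi_{\mathrm{new}}$ with $\psi_{\mathrm{new}}\in Ker(Tr)\cap Ker(Tr')=S^{1,new}_{k,m}(\G_0(P_d))$ is then equivalent to solving
\[ \varphi_1+P_d^{m-k}\T_{P_d}\varphi_2=Tr(\psi),\qquad P_d^{m-k}\T_{P_d}\varphi_1+P_d^{2m-k}\varphi_2=Tr'(\psi) \]
in $(\varphi_1,\varphi_2)\in V^2$, and eliminating $\varphi_1$ from the first equation reduces this system to the single condition
\[ P_d^{2m-k}\D(\varphi_2)=Tr'(\psi)-P_d^{m-k}\T_{P_d}(Tr(\psi)). \]

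Both directions of the equivalence now follow by a short linear-algebra argument. If $\D$ is bijective the reduced equation admits a unique solution $\varphi_2$ for every $\psi$, which together with $\varphi_1=Tr(\psi)-P_d^{m-k}\T_{P_d}\varphi_2$ supplies the required decomposition; moreover, if $\psi=\delta_1\varphi_1+\delta_{P_d}\varphi_2\in S^{1,old}_{k,m}(\G_0(P_d))\cap S^{1,new}_{k,m}(\G_0(P_d))$, the vanishing of $Tr(\psi)$ and $Tr'(\psi)$ forces $\D(\varphi_2)=0$ and $\varphi_1=-P_d^{m-k}\T_{P_d}\varphi_2$, so the injectivity of $\D$ together with Proposition \ref{PropDeltaInj} yields $\psi=0$. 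Conversely, assuming the sum is direct, any $\varphi_2\in Ker(\D)$ produces an element $\delta_1(-P_d^{m-k}\T_{P_d}\varphi_2)+\delta_{P_d}\varphi_2$ of $S^{1,old}\cap S^{1,new}=\{0\}$, and Proposition \ref{PropDeltaInj} forces $\varphi_2=0$; finite-dimensionality of $V$ then upgrades $Ker(\D)=0$ to full bijectivity.

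The only mildly delicate step will be the careful bookkeeping of the powers of $P_d$ and the identification $P_d^{k-2m}(Tr')^2=P_d^{-k}\T_{P_d}^2$; beyond that, the argument uses nothing past the trace and Fricke formulas already collected in Section \ref{SecTr}.
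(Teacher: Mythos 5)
Your proof is correct, and its core --- reducing the existence of a decomposition $\psi=\delta(\varphi_1,\varphi_2)+\psi_{\mathrm{new}}$ to a two-by-two linear system over $V=S^1_{k,m}(GL_2(\mathcal{O}))$ and eliminating $\varphi_1$ to land on $\D$ --- is the same reduction the paper performs, only written in terms of $\T_{P_d}$ (via $Tr'\circ\delta_1=P_d^{m-k}\T_{P_d}$ and $Tr'\circ\delta_{P_d}=P_d^{2m-k}Id$) rather than in terms of $Tr$, $Tr'$ and the Fricke involution; your identification $P_d^{k-2m}(Tr')^2=P_d^{-k}\T_{P_d}^2$ on $V$ is exactly the computation the paper records right after the theorem when describing $Ker(\D)$. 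Where you genuinely diverge is the converse direction: the paper takes $\eta\in Ker(\D)$, proves $Tr(\eta)=\eta$ so that $\eta=\delta_1\varphi$ is old, and then exhibits $\U_{P_d}(\eta)$ as a nonzero old-and-new form, which costs an appeal to Remark \ref{RemUStable} and to Propositions \ref{PropKerUt} and \ref{PropDeltaInj} just to guarantee $\U_{P_d}(\eta)\neq 0$. You instead write down the witness $\delta(-P_d^{m-k}\T_{P_d}\varphi_2,\varphi_2)$ directly from $\varphi_2\in Ker(\D|_V)$ and kill it with the injectivity of $\delta$ alone; this is the same element up to the scalar $-P_d^{k-m}$ (since $\U_{P_d}(\delta_1\varphi)=\delta(\T_{P_d}\varphi,-P_d^{k-m}\varphi)$), but your route is shorter and makes visible that the trivial-intersection condition by itself already forces $\D$ to be injective. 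One small point you should add: the theorem's $\D$ is naturally a self-map of all of $S^1_{k,m}(\G_0(P_d))$, whereas you prove bijectivity of $\D|_V$; this does suffice, because $\D\eta=0$ forces $\eta=P_d^{k-2m}(Tr')^2\eta\in Im(\delta_1)$, so $Ker(\D)\subseteq Im(\delta_1)$ and the two notions of bijectivity agree by finite-dimensionality --- but that one line needs to be said.
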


\begin{proof}
$(\Longleftarrow)$
We start by proving that the intersection between newforms and oldforms is trivial.\\
Let $\eta=\delta(\varphi,\psi)\in S^1_{k,m}(\G_0(P_d))$ be old and new. The following facts hold:
\begin{itemize}
\item $\eta=\varphi+\psi^{Fr}$ since $\varphi$ and $\psi$ are both of level 1;
\item $0=Tr(\eta)=Tr(\varphi)+Tr(\psi^{Fr})=\varphi +Tr'(\psi)$, so that $Tr'(\psi)=-\varphi$;
\item $0=Tr'(\eta)=Tr'(\varphi)+Tr'(\psi^{Fr})=0$.
\end{itemize}
From the last two equalities we get
\[  0=-Tr'(Tr'\psi)+Tr((\psi^{Fr})^{Fr})=-(Tr')^2(\psi)+P_d^{2m-k}Tr(\psi) . \]
So
\[ (Tr')^2\psi-P_d^{2m-k}\psi =0  \] 
and 
\[  (Id-P_d^{k-2m}(Tr')^2)\psi=\D\psi=0. \] 
Since, by hypothesis, $\D$ is invertible, this yields $\psi=0$ and $\varphi=-Tr'(\psi)=0$ as well.\\
Now we have to prove the sum condition. Given $\eta\in S^1_{k,m}(\G_0(P_d))$ it is sufficient to find
$\varphi_1,\varphi_2\in S^1_{k,m}(GL_2(\mathcal{O}))$ such that $\eta-\delta(\varphi_1,\varphi_2)$ is new, i.e.
we need to solve the following
\[
\left\{ \begin{array}{l}Tr(\eta-\delta(\varphi_1,\varphi_2))=0\\
Tr'(\eta-\delta(\varphi_1,\varphi_2)) =0  
\end{array}\right. \ . \]
These equations are equivalento to 
\begin{equation}\label{EqSystem1}
\left\{ \begin{array}{l} Tr(\eta)-\varphi_1-Tr(\delta_{P_d}\varphi_2)=0\\
Tr'(\eta)-Tr'(\varphi_1)-Tr'(\delta_{P_d}\varphi_2)=0
\end{array}\right., \quad {\rm i.e.}\quad    
\left\{\begin{array}{l}
Tr(\eta)-\varphi_1-Tr(\varphi_2^{Fr})=0\\
Tr'(\eta)-Tr'(\varphi_1)-Tr'(\varphi_2^{Fr})=0
\end{array}
\right. ,
\end{equation}
which finally leads to 
\begin{equation}\label{EqSystem2}
\left\{ 
\begin{array}{l}\varphi_1=Tr(\eta)-Tr(\varphi_2^{Fr})\\
Tr(\eta^{Fr})-Tr(\varphi_1^{Fr})-P_d^{2m-k}\varphi_2=0
\end{array}
\right. \ .
\end{equation}
Using the two equations of \eqref{EqSystem2} we have
\begin{align}\label{EqSystem3}
\varphi_2& =P_d^{k-2m}[Tr'(\eta)-Tr'(\varphi_1)] \\ \nonumber
&= P_d^{k-2m}[Tr'(\eta)-Tr'(Tr(\eta))+(Tr')^2(\varphi_2)]\,.
\end{align}
Then $\D\varphi_2=P_d^{k-2m}[Tr'(\eta)-Tr'(Tr(\eta))]$ and $\varphi_2=P_d^{k-2m}\D^{-1}(Tr'(\eta-Tr(\eta)))$.\\
Substituting the first expression for $\varphi_2$ found in \eqref{EqSystem3} in the first equation of \eqref{EqSystem2}, one has
\[
\varphi_1=Tr(\eta)-P_d^{k-2m}(Tr')^2\eta+P_d^{k-2m}(Tr')^2\varphi_1  ,\] 
which implies
\[  \D\varphi_1=Tr(\eta)-P_d^{k-2m}(Tr')^2\eta \]
and finally
\[  \varphi_1=\D^{-1}(Tr(\eta)-P_d^{k-2m}(Tr')^2\eta). \]

\noindent$(\Longrightarrow)$
Let $\eta\neq 0$ be such that $\eta\in Ker(\D)$. Then $P_d^{2m-k}\eta=(Tr')^2\eta$. Recall that $Tr^2=Tr$ (as for any trace map)
and apply $Tr$ to obtain
\begin{align*}
P_d^{2m-k}Tr(\eta) & = Tr(Tr'(Tr'\eta))\\
& = Tr(Tr((Tr'\eta)^{Fr})) = (Tr')^2(\eta).
\end{align*}
Therefore $Tr(\eta)=\eta$, so $\eta$ is old and it  is contained in the image of $\delta_1$. Observe that $\U_{P_d}(\eta)\neq 0$, otherwise,
by Proposition \ref{PropKerUt}, one would have $\eta\in Im(\delta_1)\cap Im(\delta_{P_d})=\{0\}$ (by Proposition \ref{PropDeltaInj}). 
In particular, by Remark \ref{RemUStable}, $\U_{P_d}(\eta)$ is old. Then
\begin{align*}
P_d^{2m-k}\eta & = (Tr')^2\eta\\
& = Tr'(Tr'(\eta))\qquad\qquad\qquad {\rm (apply\ \eqref{EqTr'})}\\
& = Tr'(\eta^{Fr}+P_d^{m-k}\U_{P_d}(\eta)) \\
& = Tr((\eta^{Fr})^{Fr})+P_d^{m-k}Tr'(\U_{P_d}(\eta))\\
& = P_d^{2m-k}Tr(\eta)+P_d^{m-k}Tr'(\U_{P_d}(\eta)) .
\end{align*} 
So, $Tr'(\U_{P_d}(\eta))=0$ (because $\eta$ is old with $Tr(\eta)=\eta$).\\
Finally note that, by equations \eqref{EqDeltaT}, \eqref{EqTrD1} and \eqref{EqTrD},
\begin{align*}
Tr(\U_{P_d}(\eta)) & = Tr(\delta_1\T_{P_d}(\eta))-P_d^{k-m}Tr(\delta_{P_d}\eta)\\
 & = \T_{P_d}(\eta) -P_d^{k-m}P_d^{m-k}\T_{P_d}(\eta)=0
\end{align*}
So, $\U_{P_d}(\eta)$ is also new and we do not have direct sum.
\end{proof}

From the above proof an easy calculation leads to
\[  Ker(\D)=\{ \delta_1\varphi : \varphi\in S^1_{k,m}(GL_2(\mathcal{O}))\ \mathrm{and}\ \T_{P_d}\varphi=\pm P_d^{k/2}\varphi  \}.\]
Indeed recall that for any cusp form $\psi$ of level 1 we have $\delta_{P_d}\psi= (\delta_1\psi)^{Fr}$, hence 
\begin{align*}
Tr'(Tr'(\delta_1\varphi)) & = Tr'(Tr(\delta_{P_d}\varphi)) \\
 & = P_d^{m-k}Tr'(\T_{P_d}\varphi) \\
 & = P_d^{m-k} Tr((\T_{P_d}\varphi)^{Fr}) \\
 & = P_d^{m-k} Tr(\delta_{P_d}\T_{P_d}\varphi)\\
 & = P_d^{2m-2k} \T_{P_d}^2\varphi
\end{align*}
  
\noindent Moreover, $\delta_1\varphi\in Ker(\D)$ implies:
\begin{itemize}
\item $\U_{P_d}(\delta_1\varphi)$ is old and new;
\item if $\T_{P_d}\varphi= P_d^{k/2}\varphi $ then $$\U_{P_d}(\delta_1\varphi-P_d^{k/2-m}\delta_{P_d}\varphi)= P_d^{k/2}(\delta_1\varphi-P_d^{k/2-m}\delta_{P_d}\varphi)$$ and $\delta_1\varphi-P_d^{k/2-m}\delta_{P_d}\varphi$ is old and new;
\item if $\T_{P_d}\varphi= -P_d^{k/2}\varphi $ then $$\U_{P_d}(\delta_1\varphi+P_d^{k/2-m}\delta_{P_d}\varphi)=- P_d^{k/2}(\delta_1\varphi+P_d^{k/2-m}\delta_{P_d}\varphi)$$ and $\delta_1\varphi+P_d^{k/2-m}\delta_{P_d}\varphi$ is old and new. 
\end{itemize}

\section{Special case: $P_1=t$.}\label{SecSpecCase}

For the level $P_1=t$ we explicitly computed the matrices associated to the operator $\U_t$, the Fricke involution and the trace maps
(see \cite[Section 4]{BV2} and \cite[Sections 3 and 4]{BV3}): for the convenience of the reader we are going to briefly describe here these 
matrices.\\
We recall that, in order to have $S^1_{k,m}(\G_0(t))\neq 0$, we need $k\equiv 2m\pmod{q-1}$.  
Moreover, it is always possible to find a $j\in \{0,1,\dots,q-2\}$ and a unique $n\in \Z_{\geqslant 0}$ such that
 $k=2(j+1)+(n-1)(q-1)$ ($j$ is related to the type $m$ by the relation $m\equiv j+1\pmod{q-1}$, see \cite[Section 4.3]{BV2}). 
 From now on, the letters $j$ and $n$ will always be linked to the weight $k$ by the previous formula, giving us information, respectively,  
on the type $m$ and the dimension of the matrix $U$ associated to $\U_t$ acting on $S^1_{k,m}(\G_0(t))$.\\
  We have
 \begin{equation}\label{EqAt}
U= M D:= M \left(\begin{array}{ccc}
t^{s_1} & \cdots & 0\\
 & \ddots & \\
0 & \cdots & t^{s_n}
\end{array}\right)
\end{equation}
where, for $1\leqslant i\leqslant n$, we put $s_i=j+1+(i-1)(q-1)$ (so that $s_i+s_{n+1-i}=k$ for $1\leqslant i\leqslant\frac{n}{2}$ 
or $1\leqslant i\leqslant\frac{n+1}{2}$ according to $n$ being even or odd) and, for even $n$, the matrix $M$ is

{ \small\[ M= \left(\begin{array}{cccccccc} m_{1,1} & m_{1,2} & \cdots & m_{1,\frac{n}{2}} & (-1)^{j+1}m_{1,\frac{n}{2}}
& \cdots & (-1)^{j+1}m_{1,2} & (-1)^{j+1}(m_{1,1}-1)\\
m_{2,1} & m_{2,2} & \cdots & m_{2,\frac{n}{2}} & (-1)^{j+1}m_{2,\frac{n}{2}} & \cdots & (-1)^{j+1}(m_{2,2}-1) & (-1)^{j+1}m_{2,1}\\
\vdots & \vdots &   & \vdots & \vdots &  & \vdots & \vdots\\
m_{\frac{n}{2},1} & m_{\frac{n}{2},2} & \cdots & m_{\frac{n}{2},\frac{n}{2}} & (-1)^{j+1}(m_{\frac{n}{2},\frac{n}{2}}-1) & \cdots &
(-1)^{j+1}m_{\frac{n}{2},2} & (-1)^{j+1}m_{\frac{n}{2},1}\\
m_{\frac{n}{2}+1,1} & m_{\frac{n}{2}+1,2} & \cdots & (-1)^j & 0 & \cdots & (-1)^{j+1}m_{\frac{n}{2}+1,2}
& (-1)^{j+1}m_{\frac{n}{2}+1,1}\\
\vdots & \vdots & \iddots  & \vdots & \vdots & \ddots & \vdots & \vdots\\
m_{n-1,1} & (-1)^j & \cdots  & 0 & 0 &  \cdots & 0 & (-1)^{j+1}m_{n-1,1}\\
(-1)^j & 0 & \cdots  & 0 & 0 & \cdots & 0 & 0
\end{array}
\right)\,,\] }
while for odd $n$ one just needs to modify the indices a bit and add the central $\frac{n+1}{2}$-th column
\[ (m_{1,\frac{n+1}{2}}, \cdots , m_{\frac{n-1}{2},\frac{n+1}{2}}, (-1)^j, 0, \cdots, 0) .\]
The entries of $M$ are the binomial coefficients in $\F_p$
\begin{equation}\label{spam}
\displaystyle{ m_{a,b}= \left\{\begin{array}{ll}
\displaystyle{-\left[\binom{j+(n-a)(q-1)}{j+(n-b)(q-1)} + (-1)^{j+1} \binom{j+(n-a)(q-1)}{j+(b-1)(q-1)}\right]} & {\rm if}\ a\neq b \\
\displaystyle{(-1)^j\binom{j+(n-a)(q-1)}{j+(a-1)(q-1)}} & {\rm if}\ a=b \end{array}\right. .}
\end{equation}
The other matrices associated to the relevant maps we used to define oldforms and newforms are the following:
\begin{itemize}
\item the matrix for the Fricke involution is 
\begin{equation}\label{MatrixFricke}
t^{m-k}F=t^{m-k}\left( \begin{array}{ccc} 0 & \dots  & (-t)^{s_n} \\ 
 & \iddots & \\ (-t)^{s_1} & \dots & 0  \end{array}\right) = t^{m-k} 
 \left( \begin{array}{ccc} 0 & \dots  & (-1)^{j+1}t^{s_n} \\ 
 & \iddots & \\ (-1)^{j+1}t^{s_1} & \dots & 0  \end{array}\right)\,.
\end{equation}
Note that, if we let $A$ be the antidiagonal matrix  
\begin{equation}\label{EqA} A= \left( \begin{array}{ccc} 0 & \dots  & (-1)^{j+1} \\ 
 & \iddots & \\ (-1)^{j+1} & \dots & 0  \end{array}\right),\end{equation}
we get $AF=D$;
\item from equation \eqref{EqTr} we find that the trace is represented by the matrix
\begin{equation}\label{eqTr} 
T:= I+t^{-m}MD(t^{m-k}F)=I+t^{-k}MAF^2 = I+MA 
\end{equation}
where $I$ is the identity matrix of dimension $n$;
\item the twisted trace is represented by
\begin{equation}\label{eqTr'}
  T'=t^{m-k}TF= t^{m-k}(F+ MD ).
\end{equation}  
\end{itemize}

\begin{rem}\label{RemMatrixT}
Note that $MA$ switches columns $i$ and $n+1-i$ in the matrix $M$ and multiplies everything by $(-1)^{j+1}$: looking at the 
description of $M$ we see that this produces a matrix which looks just like $M$ except for the fact that the $(-1)^j$ on the 
antidiagonal disappear and are substituted by $(-1)^j(-1)^{j+1}=-1$ on the diagonal. Therefore the matrix $T=I+MA$ is the
following (for even $n$)
{ \small\[ T= \left(\begin{array}{cccccccc} m_{1,1} & m_{1,2} & \cdots & m_{1,\frac{n}{2}} & (-1)^{j+1}m_{1,\frac{n}{2}}
& \cdots & (-1)^{j+1}m_{1,2} & (-1)^{j+1}m_{1,1}\\
m_{2,1} & m_{2,2} & \cdots & m_{2,\frac{n}{2}} & (-1)^{j+1}m_{2,\frac{n}{2}} & \cdots & (-1)^{j+1}m_{2,2} & (-1)^{j+1}m_{2,1}\\
\vdots & \vdots &   & \vdots & \vdots &  & \vdots & \vdots\\
m_{\frac{n}{2},1} & m_{\frac{n}{2},2} & \cdots & m_{\frac{n}{2},\frac{n}{2}} & (-1)^{j+1}m_{\frac{n}{2},\frac{n}{2}} & \cdots &
(-1)^{j+1}m_{\frac{n}{2},2} & (-1)^{j+1}m_{\frac{n}{2},1}\\
m_{\frac{n}{2}+1,1} & m_{\frac{n}{2}+1,2} & \cdots & 0 & 0 & \cdots & (-1)^{j+1}m_{\frac{n}{2}+1,2}
& (-1)^{j+1}m_{\frac{n}{2}+1,1}\\
\vdots & \vdots & \iddots  & \vdots & \vdots & \ddots & \vdots & \vdots\\
m_{n-1,1} & 0 & \cdots  & 0 & 0 &  \cdots & 0 & (-1)^{j+1}m_{n-1,1}\\
0 & 0 & \cdots  & 0 & 0 & \cdots & 0 & 0
\end{array}
\right)\,.\] }
As before, for odd $n$ one just needs to modify the indices a bit and add the central $\frac{n+1}{2}$-th column
\[ (m_{1,\frac{n+1}{2}}, \cdots , m_{\frac{n-1}{2},\frac{n+1}{2}}, 0, \cdots, 0) .\]
Hence $T$ is basically $M$ without the $(-1)^j$ on the antidiagonal and verifies a number of equations/relations like\begin{itemize}
\item $T=A+M$;
\item $T=TA$ (this comes directly from the previous one, to verify it via computations on the above matrix one has to note that for
odd $n$ and even $j$ the central column is identically 0 because of the formula \eqref{spam}, while for odd $j$ one is simply
multiplying the central column by 1);
\item $T^2=T$, like any trace map.
\end{itemize}
From these, one can produce various relations on $M$ (like $MAT=TM=0$ or, more surprisingly, $M^3=M$) with consequences, for example, on the diagonalizability of $M$,
but we shall not pursue this topic any further here.
\end{rem}

\noindent We also recall that $Im(\delta_1)=Ker(Tr-Id)$, i.e. in terms of matrices
\begin{equation}\label{EqMA}
Im(\delta_1)=Ker(MA).
\end{equation}

In \cite[Section 5]{BV2} we hinted at some conjectures which were stated more explicitly in \cite[Conjecture 1.1]{BV3}: among other things
we conjectured that for $P_1=t$  
\begin{enumerate}
\item $\T_t$ is injective;
\item $S^1_{k,m}(\G_0(t))$ is the direct sum of oldforms and newforms. 
\end{enumerate}
In \cite{BV3} we proved some special cases building on the analog of Theorem \ref{ThmCriterionDirSum} (one of the reasons which makes us believe
the conjectures should hold for any $P_d$) and on the above matrices/formulas (which are not avaliable for $d\geqslant 2$). In particular,
in \cite[Theorem 5.5]{BV3} we proved that when $\dim_{\C_\infty}(S^1_{k,m}(GL_2(\mathcal{O}))=0$ (i.e. there are no oldforms)
the matrix $M$ is antidiagonal and the conjectures hold: we shall now approach the case $\dim_{\C_\infty}(S^1_{k,m}(GL_2(\mathcal{O}))=1$,
this will include many more cases since, for example, $\dim_{\C_\infty}(S^1_{k,0}(GL_2(\mathcal{O}))=1$ if and only if
$q\leqslant n <2q-1$, by \cite[Proposition 4.3]{Cor} (compare with the bounds of \cite[Theorems 5.8, 5.9, 5.12, 5.14]{BV3}).

\subsection{Injectivity of $\T_t$}

\begin{thm}\label{ThmInj}
Assume that $\dim_{\C_\infty}Im (\delta_1)=1$, then $\T_t$ is injective.
\end{thm}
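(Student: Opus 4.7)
\emph{Proof plan.} My strategy is to work in the matrix formalism of Section \ref{SecSpecCase} and compute the unique $\T_t$-eigenvalue on the one-dimensional space $S^1_{k,m}(GL_2(\mathcal{O}))$ explicitly as a polynomial in $t$; the injectivity of $\T_t$ will follow as soon as this polynomial is shown to be nonzero.

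Let $\varphi$ generate $S^1_{k,m}(GL_2(\mathcal{O}))$ and write $\T_t\varphi=\mu\varphi$, so the claim reduces to $\mu\neq 0$. By \eqref{EqMA} the hypothesis reads $\dim_{\C_\infty}Ker(MA)=1$; let $v_0$ span this space, chosen in $\F_q^n$ since $MA$ has scalar entries. The trace matrix $T=I+MA$ of \eqref{eqTr} is an idempotent ($T^2=T$, see Remark \ref{RemMatrixT}) with image $Ker(MA)=\langle v_0\rangle$, hence of rank one, so I can write $T=v_0\ell^T$ for some row vector $\ell^T$ with entries in $\F_q$, and the identity $T^2=T$ forces $\ell^T v_0=1$.

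The next step is to apply $\delta_1$ to $\T_t\varphi=\mu\varphi$ and translate, using equation \eqref{EqDeltaT} together with the identities $F=AD$ (from $AF=D$ and $A^2=I$) and $T=A+M$ of Remark \ref{RemMatrixT}. This would give
\[
\mu v_0 \;=\; \delta_1(\T_t\varphi) \;=\; Fv_0+Uv_0 \;=\; (A+M)Dv_0 \;=\; TDv_0 \;=\; (\ell^T Dv_0)\,v_0,
\]
and hence $\mu=\ell^T Dv_0=\sum_{i=1}^n \ell_i v_{0,i}\,t^{s_i}$. The coefficients $\ell_i v_{0,i}$ are scalars in $\F_q$ satisfying $\sum_i\ell_i v_{0,i}=\ell^T v_0=1$, so at least one of them is nonzero. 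Since the exponents $s_i=j+1+(i-1)(q-1)$ are pairwise distinct, the right-hand side is a nonzero polynomial in $t$, and therefore $\mu\neq 0$.

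The only step that is not routine unwinding of definitions is extracting the rank-one factorization $T=v_0\ell^T$ together with the scalar normalization $\ell^T v_0=1$ from the one-dimensional hypothesis; once this is in hand the rest is a Vandermonde-style observation about distinct $t$-exponents. I view this as a clean matrix reformulation of the criterion of Proposition \ref{KerTPn}: the equivalence $\mu=0\Leftrightarrow U^2 v_0=0$ is built into the identity $\mu v_0=TDv_0=(F+U)v_0$, and the one-dimensional hypothesis is exactly what reduces the problem to the rank-one computation above.
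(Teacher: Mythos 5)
Your proof is correct, and it takes a genuinely different --- and considerably shorter --- route than the paper's. The paper first reduces, via Proposition \ref{KerTPn}, to showing $Ker(MA)\cap Ker((MD)^2)=0$, and then eliminates the coordinates $a_1, a_{n-1}, a_2, a_{n-2},\dots$ pair by pair through a long explicit computation with the entries \eqref{spam} of $M$ and the identity principle for polynomials in $\F_p[t]$. You never touch the entries of $M$: you observe that the hypothesis makes the idempotent $T=I+MA$ a rank-one projector (its image is its fixed space $Ker(MA)=Im(\delta_1)$, one-dimensional by \eqref{EqMA}), factor it as $T=v_0\ell^{T}$ with $\ell^{T}v_0=1$, and then the structural identities $T=A+M$ of Remark \ref{RemMatrixT} and $F=AD$ turn \eqref{EqDeltaT} into the closed formula $\mu=\ell^{T}Dv_0=\sum_i \ell_i v_{0,i}t^{s_i}$ for the unique $\T_t$-eigenvalue; since the coefficients are constants summing to $1$ (they lie in $\F_p$, not merely $\F_q$ as you wrote --- the entries of $M$ are binomial coefficients mod $p$ --- though nothing hinges on this) and the exponents $s_i$ are pairwise distinct, $\mu\neq 0$. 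All the ingredients you invoke are justified: $T^2=T$ and $T=A+M$ are recorded in Remark \ref{RemMatrixT}, and the rank-one factorization with normalization $\ell^{T}v_0=1$ is exactly the fixed-point characterization of an idempotent's image. Your argument buys more than brevity: it exhibits $\mu$ as an explicit polynomial with coefficient sum $1$, so in particular its $t$-adic valuation must equal one of the $s_i$, information the paper's elimination does not produce. What the paper's route buys in exchange is uniformity: the same coordinate-elimination machinery, including the intermediate vanishing pattern of the polynomials $p_i(t)$, is recycled almost verbatim to prove the direct-sum statement of Theorem \ref{ThmDirSum}, whereas your method would need a separate idea there.
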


\begin{proof}
By Proposition \ref{KerTPn},  $Ker(\T_t)=Ker(MA)\cap Ker(MDMD)$.
 Thanks to our assumption on the dimension of $Im(\delta_1)=Ker(MA)$ and to the fact that the entries of $MA$ are in $\F_p$,
 we have $\dim_{\C_\infty}( Ker(MA)\cap Ker(MDMD))\leqslant 1$ and we can fix a generator   
 $\underline{a}=(a_1,\dots,a_n)\in \F_p^n$. Our goal is to prove $\underline{a}=0$.\\
We prove the even dimension case, for odd $n$ the argument is exactly the same: the vector $\underline{a}$ satisfies the following 
equations coming from $MA\underline{a}=0$:
\begin{equation}\label{v1} \left\{\begin{array}{l}
(m_{1,1}-1)a_1+m_{1,2}a_2+\cdots + m_{1,\frac{n}{2}}a_{\frac{n}{2}}+(-1)^{j+1}m_{1,\frac{n}{2}}a_{\frac{n}{2}+1}+\cdots + (-1)^{j+1}m_{1,1}a_n=0\\
m_{2,1}a_1+(m_{2,2}-1)a_2+\cdots + m_{2,\frac{n}{2}}a_{\frac{n}{2}}+(-1)^{j+1}m_{2,\frac{n}{2}}a_{\frac{n}{2}+1}+\cdots + (-1)^{j+1}m_{2,1}a_n=0\\
\vdots \\
m_{\frac{n}{2},1}a_1+m_{\frac{n}{2},2}a_2+\cdots + (m_{\frac{n}{2},\frac{n}{2}}-1)a_{\frac{n}{2}}+(-1)^{j+1}m_{\frac{n}{2},\frac{n}{2}}a_{\frac{n}{2}+1}+\cdots + (-1)^{j+1}m_{\frac{n}{2},1}a_n=0\\
m_{\frac{n}{2}+1,1}a_1+m_{\frac{n}{2}+1,2}a_2+\cdots +m_{\frac{n}{2}+1,\frac{n}{2}-1}a_{\frac{n}{2}-1}-a_{\frac{n}{2}+1}+\cdots +(-1)^{j+1}m_{\frac{n}{2}+1,1}a_n=0\\
\vdots \\
m_{n-1,1}a_1-a_{n-1}+(-1)^{j+1}m_{n-1,1}a_n=0\\
a_n=0
\end{array}
\right. \,.
\end{equation}

\noindent Now put $\underline{p(t)}:=MD\underline{a}\in \F_p[t]^n$, then (with $a_n=0$)
\begin{small}
\begin{align}\label{p1}
 \underline{p(t)}\! = \! \left(\!\!\!\begin{array}{c}
 p_1(t)\\
p_2(t)\\
\vdots\\
p_{\frac{n}{2}}(t)\\
p_{\frac{n}{2}+1}(t)\\
\vdots\\
p_{n-1}(t)\\
p_n(t)
\end{array}\!\!\right) \! =  \!  \left(\!\!\! \begin{array}{c}
m_{1,1} a_1t^{s_1}+  \cdots + m_{1,\frac{n}{2}}a_{\frac{n}{2}}t^{s_{\frac{n}{2}}}+  (-1)^{j+1}m_{1,\frac{n}{2}}a_{\frac{n}{2}+1}t^{s_{\frac{n}{2}+1}}+
  \cdots  + (-1)^{j+1}m_{1,2}a_{n-1}t^{s_{n-1}}\\
m_{2,1} a_1t^{s_1}+ \cdots +  m_{2,\frac{n}{2}} a_{\frac{n}{2}}t^{s_{\frac{n}{2}}}+  (-1)^{j+1}m_{2,\frac{n}{2}}a_{\frac{n}{2}+1}t^{s_{\frac{n}{2}+1}}+   \cdots   +(-1)^{j+1}(m_{2,2}-1)a_{n-1}t^{s_{n-1}}\\
\vdots \\
m_{\frac{n}{2},1} a_1t^{s_1}+\cdots +  m_{\frac{n}{2},\frac{n}{2}}a_{\frac{n}{2}}t^{s_{\frac{n}{2}}}+   (-1)^{j+1}(m_{\frac{n}{2},\frac{n}{2}}-1)a_{\frac{n}{2}+1}t^{s_{\frac{n}{2}}+1}+   \cdots  +
(-1)^{j+1}m_{\frac{n}{2},2}a_{n-1}t^{s_{n-1}}\\
m_{\frac{n}{2}+1,1} a_1t^{s_1}+\cdots   +(-1)^ja_{\frac{n}{2}}t^{s_{\frac{n}{2}}}+     m_{\frac{n}{2}+1,\frac{n}{2}-1}a_{\frac{n}{2}+2}t^{s_{\frac{n}{2}+2}} + \cdots   +(-1)^{j+1}m_{\frac{n}{2}+1,2}a_{n-1}t^{s_{n-1}}\\
\vdots \\
m_{n-1,1} a_1t^{s_1}+ (-1)^j a_2t^{s_2}\\
(-1)^j a_1t^{s_1}
\end{array}\!\!\!\right) .
\end{align}
\end{small}

\noindent Since $MD\underline{p(t)}=0$, we also have equations:
\begin{equation}\label{w1}
\left\{\begin{array}{l}
m_{1,1}t^{s_1}p_1(t)+  \cdots + m_{1,\frac{n}{2}}t^{s_{\frac{n}{2}}}p_{\frac{n}{2}}(t)+  (-1)^{j+1}m_{1,\frac{n}{2}}t^{s_{\frac{n}{2}+1}}p_{\frac{n}{2}+1}(t)+
  \cdots  +   (-1)^{j+1}(m_{1,1}-1)t^{s_n}p_n(t)=0\\
m_{2,1}t^{s_1}p_1(t)+ \cdots +  m_{2,\frac{n}{2}} t^{s_{\frac{n}{2}}}p_{\frac{n}{2}}(t)+  (-1)^{j+1}m_{2,\frac{n}{2}}t^{s_{\frac{n}{2}+1}}p_{\frac{n}{2}+1}(t)+   \cdots  +   (-1)^{j+1}m_{2,1}t^{s_n}p_n(t)=0\\
\vdots \\
m_{\frac{n}{2},1}t^{s_1}p_1(t)+\cdots +  m_{\frac{n}{2},\frac{n}{2}}t^{s_{\frac{n}{2}}}p_{\frac{n}{2}}(t)+   (-1)^{j+1}(m_{\frac{n}{2},\frac{n}{2}}-1)t^{s_{\frac{n}{2}+1}}p_{\frac{n}{2}+1}(t)+   \cdots  +
 (-1)^{j+1}m_{\frac{n}{2},1}t^{s_n}p_n(t)=0\\
m_{\frac{n}{2}+1,1}t^{s_1}p_1(t)+\cdots   +(-1)^jt^{s_{\frac{n}{2}}}p_{\frac{n}{2}}(t)+(-1)^{j+1}m_{\frac{n}{2}+1,\frac{n}{2}-1}t^{s_{\frac{n}{2}+2}}p_{\frac{n}{2}+2}(t)+     \cdots   +
  (-1)^{j+1}m_{\frac{n}{2}+1,1}t^{s_n}p_n(t)=0\\
\vdots \\
m_{n-1,1}t^{s_1}p_1(t)+ (-1)^j t^{s_2}p_2(t)+ (-1)^{j+1}m_{n-1,1}t^{s_n}p_n(t)=0\\
(-1)^jt^{s_1}p_1(t)=0
\end{array} \right.\,.
\end{equation}

\noindent Note that in \eqref{w1} we have polynomials in $\F_p[t]$, from now on we shall basically use the identity principle
for polynomials to solve the equations in the $a_i$. From the last row in \eqref{w1} we get $p_1(t)=0$, i.e. comparing with
\eqref{p1} 
\[ m_{1,1} a_1= m_{1,2}a_2= \cdots = m_{1,\frac{n}{2}}a_{\frac{n}{2}}= m_{1,\frac{n}{2}}a_{\frac{n}{2}+1}=
  \cdots =   m_{1,2}a_{n-1}=0\,. \]
Substituting in the first and second-last equations in \eqref{v1} we obtain
  $$a_1=a_{n-1}=0 $$
  which also means that $p_n(t)=0$.\\
We can rewrite \eqref{v1}, \eqref{p1} and \eqref{w1} as

\begin{equation}\label{v2} \left\{\begin{array}{l}
(m_{2,2}-1)a_2+\cdots + m_{2,\frac{n}{2}}a_{\frac{n}{2}}+(-1)^{j+1}m_{2,\frac{n}{2}}a_{\frac{n}{2}+1}+\cdots + (-1)^{j+1}m_{2,3}a_{n-2}=0\\
\vdots \\
m_{\frac{n}{2},2}a_2+\cdots + (m_{\frac{n}{2},\frac{n}{2}}-1)a_{\frac{n}{2}}+(-1)^{j+1}m_{\frac{n}{2},\frac{n}{2}}a_{\frac{n}{2}+1}+\cdots + (-1)^{j+1}m_{\frac{n}{2},3}a_{n-2}=0\\
m_{\frac{n}{2}+1,2}a_2+\cdots+m_{\frac{n}{2}+1,\frac{n}{2}-1}a_{\frac{n}{2}-1}-a_{\frac{n}{2}+1}+\cdots +(-1)^{j+1}m_{\frac{n}{2}+1,3}a_{n-2}=0\\
\vdots \\
m_{n-2,2}a_2-a_{n-2}=0\\
a_1=a_{n-1}=a_n=0
\end{array}
\right. ,
\end{equation}

\begin{small}
\begin{equation}\label{p2}
\left(\begin{array}{c}
 p_1(t)\\
p_2(t)\\
\vdots\\
p_{\frac{n}{2}}(t)\\
p_{\frac{n}{2}+1}(t)\\
\vdots\\
p_{n-1}(t)\\
p_n(t)
\end{array}\right) =\left( \begin{array}{c}
0\\
m_{2,2}a_2t^{s_2}+ \cdots +  m_{2,\frac{n}{2}} a_{\frac{n}{2}}t^{s_{\frac{n}{2}}}+  (-1)^{j+1}m_{2,\frac{n}{2}}a_{\frac{n}{2}+1}t^{s_{\frac{n}{2}+1}}+   \cdots   +(-1)^{j+1}m_{2,3}a_{n-2}t^{s_{n-2}}\\
\vdots \\
m_{\frac{n}{2},2} a_2t^{s_2}+\cdots +  m_{\frac{n}{2},\frac{n}{2}}a_{\frac{n}{2}}t^{s_{\frac{n}{2}}}+   (-1)^{j+1}(m_{\frac{n}{2},\frac{n}{2}}-1)a_{\frac{n}{2}+1}t^{s_{\frac{n}{2}}+1}+   \cdots  +
(-1)^{j+1}m_{\frac{n}{2},3}a_{n-2}t^{s_{n-2}}\\
m_{\frac{n}{2}+1,2} a_2t^{s_2}+\cdots   +(-1)^ja_{\frac{n}{2}}t^{s_{\frac{n}{2}}}+     m_{\frac{n}{2}+1,\frac{n}{2}-1}a_{\frac{n}{2}+2}t^{s_{\frac{n}{2}+2}} + \cdots   +(-1)^{j+1}m_{\frac{n}{2}+1,3}a_{n-2}t^{s_{n-2}}\\
\vdots \\
 (-1)^j a_2t^{s_2}\\
0
\end{array}\right)
\end{equation}
\end{small}
and
\begin{equation}\label{w2}\!\!\!
\left\{\begin{array}{l}
m_{1,2}t^{s_2}p_2(t)+  \cdots + m_{1,\frac{n}{2}}t^{s_{\frac{n}{2}}}p_{\frac{n}{2}}(t)+  (-1)^{j+1}m_{1,\frac{n}{2}}t^{s_{\frac{n}{2}+1}}p_{\frac{n}{2}+1}(t)+
  \cdots     (-1)^{j+1}m_{1,2}t^{s_{n-1}}p_{n-1}(t)=0\\
 m_{2,2}t^{s_2}p_2(t)+ \cdots +  m_{2,\frac{n}{2}} t^{s_{\frac{n}{2}}}p_{\frac{n}{2}}(t)+  (-1)^{j+1}m_{2,\frac{n}{2}}t^{s_{\frac{n}{2}+1}}p_{\frac{n}{2}+1}(t)+   \cdots  +   (-1)^{j+1}(m_{2,2}-1)t^{s_{n-1}}p_{n-1}(t)=0\\
\vdots \\
m_{\frac{n}{2},2} t^{s_2}p_2(t)+\cdots +  m_{\frac{n}{2},\frac{n}{2}}t^{s_{\frac{n}{2}}}p_{\frac{n}{2}}(t)+   (-1)^{j+1}(m_{\frac{n}{2},\frac{n}{2}}-1)t^{s_{\frac{n}{2}+1}}p_{\frac{n}{2}+1}(t)+   \cdots  +
 (-1)^{j+1}m_{\frac{n}{2},2}t^{s_{n-1}}p_{n-1}(t)=0\\
m_{\frac{n}{2}+1,2} t^{s_2}p_2(t)+\cdots   +(-1)^jt^{s_{\frac{n}{2}}}p_{\frac{n}{2}}(t)+(-1)^{j+1}m_{\frac{n}{2}+1,\frac{n}{2}-1}t^{s_{\frac{n}{2}+2}}p_{\frac{n}{2}+2}(t)+     \cdots   +
  (-1)^{j+1}m_{\frac{n}{2}+1,2}t^{s_{n-1}}p_{n-1}(t)=0\\
\vdots \\
(-1)^j t^{s_2}p_2(t)=0\\
p_1(t)=p_n(t)=0
\end{array} \right. \!\!\!.
\end{equation}

\noindent We repeat the same argument starting now from the second-last equation in \eqref{w2}, which yields $p_2(t)=0$. This means
\[  m_{2,2}a_2= \cdots =  m_{2,\frac{n}{2}} a_{\frac{n}{2}}=  m_{2,\frac{n}{2}}a_{\frac{n}{2}+1}=   \cdots   =m_{2,3}a_{n-2}=0,\]
which, substituted in the first equation of \eqref{v2}, gives $a_2=0$. Thus (second-last equations in \eqref{v2} and \eqref{p2}) 
$a_{n-2}=0$ and $p_{n-1}(t)=0$ as well.\\
Iterating the process we see that the specular symmetries between $MD$ ($(-1)^j$ on the antidiagonal) and $MA$ ($-1$ on the diagonal)  
lead to $\underline{a}=0$.
\end{proof}

\subsection{Direct sum}

\begin{thm}\label{ThmDirSum}
Assume that $\dim_{\C_\infty}Im (\delta_1)=1$.  Then $S^1_{k,m}(\G_0(t))=S^{1,old}_{k,m}(\G_0(t))\oplus S^{1,new}_{k,m}(\G_0(t))$.
\end{thm}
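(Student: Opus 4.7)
The plan is to invoke Theorem \ref{ThmCriterionDirSum}, thereby reducing the direct-sum statement to the bijectivity (equivalently, injectivity, since we are in finite dimension) of the operator $\D = Id - t^{k-2m}(Tr')^2$ acting on $S^1_{k,m}(\G_0(t))$. The explicit description
\[ Ker(\D) = \{\delta_1\varphi : \varphi \in S^1_{k,m}(GL_2(\mathcal{O})),\ \T_t\varphi = \pm t^{k/2}\varphi\} \]
established right after Theorem \ref{ThmCriterionDirSum} means that we must show no nonzero level-$1$ cusp form is a $\T_t$-eigenform with eigenvalue $\pm t^{k/2}$. Under the hypothesis $\dim_{\C_\infty} Im(\delta_1) = 1$, the space $S^1_{k,m}(GL_2(\mathcal{O}))$ is generated by a single form $\varphi_0$ and $\T_t$ acts on it by multiplication by some $\lambda \in \C_\infty$; the goal is therefore $\lambda \neq \pm t^{k/2}$.

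The next step is to translate this into the matrix calculus of Section \ref{SecSpecCase}. Since $MA$ has entries in $\F_p$ and $Ker(MA) = Im(\delta_1)$ is one-dimensional, one can fix a generator $\underline{a} \in \F_p^n$ of $Ker(MA)$ representing $\delta_1\varphi_0$ (exactly as in the proof of Theorem \ref{ThmInj}). Combining $\delta_t\varphi_0 = (\delta_1\varphi_0)^{Fr}$ with the Fricke matrix $t^{m-k}F$ and formula \eqref{EqDeltaT}, the eigenvalue equation $\T_t\varphi_0 = \lambda\varphi_0$ translates into
\[ (F+MD)\underline{a} = \lambda\underline{a}. \]
Assuming for contradiction $\lambda = \epsilon t^{k/2}$ with $\epsilon = \pm 1$, the task reduces to deriving $\underline{a} = 0$ from $(F+MD)\underline{a} = \epsilon t^{k/2}\underline{a}$.

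The heart of the argument is a single coefficient comparison in $t$. Because the $a_i$ are $t$-independent while the entries of $F+MD$ lie in $\F_p[t]$, each coordinate equation is a polynomial identity in $\F_p[t]$. The monomials on the left lie among $\{t^{s_1},\dots,t^{s_n}\}$ (coming from the diagonal $D$ and the antidiagonal $F$), and since $s_i + s_{n+1-i}=k$, the value $k/2$ equals some $s_l$ only when $n$ is odd, in which case $l=(n+1)/2$. If $n$ is even, the $t^{k/2}$-coefficient of $(F+MD)\underline{a}$ vanishes at every position, whereas the right-hand side contributes $\epsilon a_i$, forcing $a_i = 0$ for all $i$. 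If $n$ is odd, one first examines the central position $i=(n+1)/2$, where the $MD$-contribution $(-1)^j a_{(n+1)/2}$ and the $F$-contribution $(-1)^{j+1} a_{(n+1)/2}$ cancel exactly (mirroring the cancellation pointed out in Remark \ref{RemMatrixT}); matching $\epsilon a_{(n+1)/2}$ on the right yields $a_{(n+1)/2}=0$, after which any other position $i \neq (n+1)/2$ receives $t^{k/2}$-contribution only from $M_{i,(n+1)/2}a_{(n+1)/2}$, which is now zero, and matching $\epsilon a_i$ gives $a_i=0$.

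The step I would expect to require the most care is the justification that a polynomial identity in $t$ is admissible: this rests essentially on the hypothesis $\dim Im(\delta_1)=1$, which alone ensures a generator $\underline{a}$ with entries in $\F_p$ (hence independent of $t$). For higher-dimensional old-form spaces a general $\C_\infty$-linear combination of $\F_p$-basis vectors need not have $t$-independent coordinates, and the coefficient-matching would no longer be available. Once this is in place, the combinatorics of where $k/2$ sits relative to $\{s_l\}$, together with the central cancellation in the odd-$n$ case, is forced by the symmetry $s_i + s_{n+1-i}=k$ and by the antidiagonal structures of $F$ and $M$, so the remainder of the argument should be immediate.
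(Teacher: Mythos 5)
Your proof is correct, but its computational core is genuinely different from the paper's. Both arguments reduce, via Theorem \ref{ThmCriterionDirSum}, to showing $Ker(\D)=0$, and both exploit the fact that a generator $\underline{a}$ of the one--dimensional space $Im(\delta_1)=Ker(MA)$ may be chosen in $\F_p^n$, so that coordinate equations become polynomial identities in $t$ that can be read off coefficient by coefficient. The paper, however, does not use the eigenvalue description of $Ker(\D)$ in this proof: it works directly with the two conditions $MA\underline{a}=0$ and $TF(MD\underline{a})=0$ (the latter expressing that $\U_t(\delta_1\varphi)$ is new) and eliminates the coordinates $a_i$ iteratively by inspecting the top-degree terms of the resulting systems. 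You instead invoke the identification $Ker(\D)=\{\delta_1\varphi:\T_t\varphi=\pm t^{k/2}\varphi\}$ stated right after Theorem \ref{ThmCriterionDirSum} — whose ingredients, namely $Ker(\D)\subseteq Im(\delta_1)$ and $(Tr')^2\circ\delta_1=t^{2m-2k}\,\delta_1\circ\T_t^2$, are indeed established there — translate the eigenvalue equation via \eqref{EqDeltaT} and the Fricke matrix into $(F+MD)\underline{a}=\pm t^{k/2}\underline{a}$, and extract only the coefficient of $t^{k/2}$. Since $s_i+s_{n+1-i}=k$, the exponent $k/2$ occurs among the $s_l$ only for odd $n$ at $l=\frac{n+1}{2}$, where the central diagonal entry $(-1)^j$ of $M$ cancels against the antidiagonal entry $(-1)^{j+1}$ of $F$; hence the left-hand side contributes nothing in degree $k/2$ and $a_i=0$ for every $i$. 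This is a genuine simplification — a single coefficient comparison replaces the paper's induction — and it makes transparent that the only possible obstruction to the direct sum is a slope-$\frac{k}{2}$ eigenform of level one. The trade-off is the same as in the paper: the one-dimensionality hypothesis is used essentially to get a $t$-independent eigenvector, so neither version extends as written to higher-dimensional spaces of oldforms.
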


\begin{proof}
We use the criterion of Theorem \ref{ThmCriterionDirSum}, where we noted that an element $\eta\in Ker( \D)$ must be in $Im(\delta_1)=Ker(MA)$ 
as well, and that $\U_t(\eta)$ is both old and new. We take $\underline{a}\in \F_p^n$ which verifies $MA\underline{a}=0$ and 
represents an element $\eta=\delta_1\varphi\in Ker(\D)$, then $Tr'(\U_t(\delta_1\varphi))=0$, i.e. $TF(MD\underline{a})=0$ and we
prove that these two relations yield $\underline{a}=0$, so that $Ker(\D)=0$ and $\D$ is invertible. 
As before we only treat the case of even $n$.

\noindent The equation $MA\underline{a}=0$ gives again the system \eqref{v1} (in particular $a_n=0$), then, writing 
$\underline{p(t)}=MD\underline{a}$ as in \eqref{p1}, from $TF(MD\underline{a})=0$ we get
\begin{align}\label{d1}
\left\{ \begin{array}{l}
m_{1,1}t^{s_1}p_1(t)+  \cdots + m_{1,\frac{n}{2}}t^{s_{\frac{n}{2}}}p_{\frac{n}{2}}(t)+  m_{1,\frac{n}{2}}(-t)^{s_{\frac{n}{2}+1}}p_{\frac{n}{2}+1}(t)+
  \cdots  +  m_{1,1}(-t)^{s_n}p_n(t)=0\\
m_{2,1}t^{s_1}p_1(t)+ \cdots +  m_{2,\frac{n}{2}} t^{s_{\frac{n}{2}}}p_{\frac{n}{2}}(t)+ m_{2,\frac{n}{2}}(-t)^{s_{\frac{n}{2}+1}}p_{\frac{n}{2}+1}(t)+   \cdots  +   m_{2,1}(-t)^{s_n}p_n(t)=0\\
\vdots \\
m_{\frac{n}{2},1}t^{s_1}p_1(t)+\cdots +  m_{\frac{n}{2},\frac{n}{2}}t^{s_{\frac{n}{2}}}p_{\frac{n}{2}}(t)+   m_{\frac{n}{2},\frac{n}{2}}(-t)^{s_{\frac{n}{2}+1}}p_{\frac{n}{2}+1}(t)+   \cdots  +
 m_{\frac{n}{2},1}(-t)^{s_n}p_n(t)=0\\
m_{\frac{n}{2}+1,1}t^{s_1}p_1(t)+\cdots   +m_{\frac{n}{2}+1,1}(-t)^{s_n}p_n(t)=0\\
\vdots \\
m_{n-2,1}t^{s_1}p_1(t)+ m_{n-2,2}t^{s_2}p_2(t)+ m_{n-2,2}(-t)^{s_{n-1}}p_{n-1}(t)+m_{n-2,1}(-t)^{s_n}p_n(t)=0\\
m_{n-1,1}t^{s_1}p_1(t)+ m_{n-1,1}(-t)^{s_n}p_n(t)=0
\end{array}
\right.
\end{align}
(the matrix $T$ can be taken from Remark \ref{RemMatrixT}). 

\noindent In the last equation of \eqref{d1} the term with the highest degree in $t$ is $m_{n-1,1}(-t)^{s_n}(-1)^ja_1t^{s_1}=
-m_{n-1,1}a_1t^k$ (note that $p_1(t)$ has degree at most $s_{n-1}$ because $a_n=0$): therefore $m_{n-1,1}a_1=0$ and the
second-last equation in \eqref{v1} tell us that $a_{n-1}=0$. 
Now \eqref{v1} and \eqref{p1} turn into

\begin{align}\label{s2}
\left\{ \begin{array}{l}
(m_{1,1}-1)a_1+m_{1,2}a_2+\cdots + m_{1,\frac{n}{2}}a_{\frac{n}{2}}+(-1)^{j+1}m_{1,\frac{n}{2}}a_{\frac{n}{2}+1}+\cdots + (-1)^{j+1}m_{1,3}a_{n-2}=0\\
m_{2,1}a_1+(m_{2,2}-1)a_2+\cdots + m_{2,\frac{n}{2}}a_{\frac{n}{2}}+(-1)^{j+1}m_{2,\frac{n}{2}}a_{\frac{n}{2}+1}+\cdots + (-1)^{j+1}m_{2,3}a_{n-2}=0\\
\vdots \\
m_{\frac{n}{2},1}a_1+m_{\frac{n}{2},2}a_2+\cdots + (m_{\frac{n}{2},\frac{n}{2}}-1)a_{\frac{n}{2}}+(-1)^{j+1}m_{\frac{n}{2},\frac{n}{2}}a_{\frac{n}{2}+1}+\cdots + (-1)^{j+1}m_{\frac{n}{2},3}a_{n-2}=0\\
m_{\frac{n}{2}+1,1}a_1+m_{\frac{n}{2}+1,2}a_2+\cdots +m_{\frac{n}{2}+1,\frac{n}{2}-1}a_{\frac{n}{2}-1}-a_{\frac{n}{2}+1}+\cdots +(-1)^{j+1}m_{\frac{n}{2}+1,3}a_{n-2}=0\\
\vdots \\
m_{n-2,1}a_1+m_{n-2,2}a_2-a_{n-2}=0\\
m_{n-1,1}a_1=0\\
a_{n-1}=a_n=0
\end{array}\right.
\end{align}

\begin{small}
\begin{align}\label{md2}
 \underline{p(t)}\! = \! \left(\!\!\!\begin{array}{c}
 p_1(t)\\
p_2(t)\\
\vdots\\
p_{\frac{n}{2}}(t)\\
p_{\frac{n}{2}+1}(t)\\
\vdots\\
p_{n-1}(t)\\
p_n(t)
\end{array}\!\!\right) \! =  \!  \left(\!\!\! \begin{array}{c}
m_{1,1} a_1t^{s_1}+  \cdots + m_{1,\frac{n}{2}}a_{\frac{n}{2}}t^{s_{\frac{n}{2}}}+  (-1)^{j+1}m_{1,\frac{n}{2}}a_{\frac{n}{2}+1}t^{s_{\frac{n}{2}+1}}+
  \cdots  + (-1)^{j+1}m_{1,3}a_{n-2}t^{s_{n-2}}\\
m_{2,1} a_1t^{s_1}+ \cdots +  m_{2,\frac{n}{2}} a_{\frac{n}{2}}t^{s_{\frac{n}{2}}}+  (-1)^{j+1}m_{2,\frac{n}{2}}a_{\frac{n}{2}+1}t^{s_{\frac{n}{2}+1}}+   \cdots   +(-1)^{j+1}m_{2,3}a_{n-2}t^{s_{n-2}}\\
\vdots \\
m_{\frac{n}{2},1} a_1t^{s_1}+\cdots +  m_{\frac{n}{2},\frac{n}{2}}a_{\frac{n}{2}}t^{s_{\frac{n}{2}}}+   (-1)^{j+1}(m_{\frac{n}{2},\frac{n}{2}}-1)a_{\frac{n}{2}+1}t^{s_{\frac{n}{2}}+1}+   \cdots  +
(-1)^{j+1}m_{\frac{n}{2},3}a_{n-2}t^{s_{n-2}}\\
m_{\frac{n}{2}+1,1} a_1t^{s_1}+\cdots   +(-1)^ja_{\frac{n}{2}}t^{s_{\frac{n}{2}}}+     m_{\frac{n}{2}+1,\frac{n}{2}-1}a_{\frac{n}{2}+2}t^{s_{\frac{n}{2}+2}} + \cdots   +(-1)^{j+1}m_{\frac{n}{2}+1,3}a_{n-2}t^{s_{n-2}}\\
\vdots \\
 (-1)^j a_2t^{s_2}\\
(-1)^j a_1t^{s_1}
\end{array}\!\!\!\right)\,.
\end{align}
\end{small}

\noindent Now consider the second-last equation in \eqref{d1} 
$$m_{n-2,1}t^{s_1}p_1(t)+m_{n-2,2}t^{s_2}p_2(t)+m_{n-2,2}(-t)^{s_{n-1}}p_{n-1}(t)+m_{n-2,1}(-t)^{s_n}p_n(t)=0.$$
The term with the highest possible degree $s_1+s_n=s_2+s_{n-1}=k$ is 
\[ m_{n-2,2}(-t)^{s_{n-1}}(-1)^ja_2t^{s_2}+m_{n-2,1}(-t)^{s_n}(-1)^ja_1t^{s_1}=
-(m_{n-2,2}a_2+m_{n-2,1}a_1)t^k \,,\] 
hence $m_{n-2,1}a_1+m_{n-2,2}a_2=0$.  Looking at the system \eqref{s2} we obtain $a_{n-2}=0$ and
$p_{n-3}(t)=m_{n-3,1}a_1t^{s_1}+\cdots+(-1)^ja_4t^{s_4}$ (it is relevant that it has degree
at most $s_4=s_{n+1-(n-3)}$, i.e. that the terms of higher degree vanish).

\noindent The proof goes on in the same way: it may be less evident than the one of Theorem \ref{ThmInj} (where the $a_i$ vanished in
couples), but looking always at the terms of degree $k$ of the $(n-i)$-th equation of \eqref{d1} we are able to prove
that $a_{n-i}=0$ and, as an immediate consequence from \eqref{p1}, that $p_{n-i-1}(t)$ has degree at most $s_{i+2}$.  
For example midway through the proof we get
\[  \underline{a}=\left(\begin{array}{c}  a_1\\ \vdots \\ a_{\frac{n}{2}}\\ 0\\ \vdots \\ 0 \end{array} \right)\quad 
\mathrm{and} \quad \underline{p(t)}=\left(\begin{array}{c} m_{1,1} a_1t^{s_1}+  \cdots + m_{1,\frac{n}{2}}a_{\frac{n}{2}}t^{s_{\frac{n}{2}}}\\
\vdots\\
m_{\frac{n}{2},1} a_1t^{s_1}+\cdots +  m_{\frac{n}{2},\frac{n}{2}}a_{\frac{n}{2}}t^{s_{\frac{n}{2}}}\\
m_{\frac{n}{2}+1,1} a_1t^{s_1}+\cdots + (-1)^ja_{\frac{n}{2}}t^{s_{\frac{n}{2}}}\\
m_{\frac{n}{2}+2,1} a_1t^{s_1}+\cdots + (-1)^ja_{\frac{n}{2}-1}t^{s_{\frac{n}{2}-1}}\\
\vdots\\
(-1)^ja_1t^{s_1}
 \end{array} \right)\,.  \]
Therefore, what remains of \eqref{v1} is
\begin{equation}\label{s3}
\left\{  
\begin{array}{l}
(m_{1,1}-1)a_1+m_{1,2}a_2+\cdots + m_{1,\frac{n}{2}}a_{\frac{n}{2}}=0\\
m_{2,1}a_1+(m_{2,2}-1)a_2+\cdots + m_{2,\frac{n}{2}}a_{\frac{n}{2}}=0\\
\vdots \\
m_{\frac{n}{2},1}a_1+m_{\frac{n}{2},2}a_2+\cdots + (m_{\frac{n}{2},\frac{n}{2}}-1)a_{\frac{n}{2}}=0\\
a_{\frac{n}{2}+1}=\cdots =a_n=0
\end{array}\,.
\right.
\end{equation}
Finally, we observe that the $\frac{n}{2}$-th equation of \eqref{d1} is
\[ m_{\frac{n}{2},1}t^{s_1}p_1(t)+\cdots+ m_{\frac{n}{2},\frac{n}{2}}t^{s_{\frac{n}{2}}}p_{\frac{n}{2}}(t)+m_{\frac{n}{2},\frac{n}{2}}(-t)^{s_{\frac{n}{2}+1}}p_{\frac{n}{2}+1}(t)+\cdots+m_{\frac{n}{2},1}(-t)^{s_n}p_n(t)=0 \,. \]
As before, the term of degree $k$ must have coefficient 0 and it appears only from 
$m_{\frac{n}{2},\frac{n}{2}}(-t)^{s_{\frac{n}{2}+1}}p_{\frac{n}{2}+1}(t)$ on, so we get
\[ m_{\frac{n}{2},\frac{n}{2}}a_{\frac{n}{2}}+m_{\frac{n}{2},\frac{n}{2}-1}a_{\frac{n}{2}-1}+ \cdots + m_{\frac{n}{2},1}a_1=0  \]
and, by \eqref{s3}, $a_\frac{n}{2}=0$ as well. \\
Iterating we get $\underline{a}=0$ and so our claim.
\end{proof}

\end{document}